\newcommand{\virgolette}[1]{``#1''}
\newcommand\reallywidehat[1]{%
\savestack{\tmpbox}{\stretchto{%
  \scaleto{%
    \scalerel*[\widthof{\ensuremath{#1}}]{\kern-.6pt\bigwedge\kern-.6pt}%
    {\rule[-\textheight/2]{1ex}{\textheight}}
  }{\textheight}%
}{0.5ex}}%
\stackon[1pt]{#1}{\tmpbox}%
}
\DeclareMathOperator*{\argmax}{arg\,max}
\newcommand{\tr}{\mathrm{Tr}}
\newcommand{\Var}{\mathbb Var}
\theoremstyle{plain}
\newtheorem{theorem}{Theorem}[section]
\newtheorem{lemma}[theorem]{Lemma}
\newtheorem{hyp}{Assumption}
\theoremstyle{definition}
\newtheorem{remark}[theorem]{Remark}
\newtheorem*{remark*}{Remark}
\numberwithin{equation}{section}
\begin{document}

\title[Gibbs sampler for almost exchangeable data]{Rates of convergence for Gibbs sampling in the analysis of almost exchangeable data}

\author[B.~Gerencs\'er]{Bal\'azs Gerencs\'er}
\address[B.~Gerencs\'er]{Alfréd Rényi Institute of Mathematics \\ Reáltanoda utca 13-15, Budapest 1053 (HU) and E\"otv\"os Lor\'and University, Department of Probability and Statistics,  P\'azm\'any P\'eter s\'et\'any 1/C, Budapest 1117 (HU)}
\email{gerencser.balazs@renyi.hu}
\author[A.~Ottolini]{Andrea Ottolini}
\address[A.~Ottolini]{Department of Mathematics, Stanford University \\ 450 Jane Stanford Way, Stanford CA 94305 (USA).}
\email{ottolini@stanford.edu}


\begin{abstract}
Motivated by de Finetti's representation theorem for almost exchangeable arrays,
we want to sample $\mathbf p \in [0,1]^d$ from a distribution with density proportional to $\exp(-A^2\sum_{i<j}c_{ij}(p_i-p_j)^2)$, where $A$ is large and $c_{ij}$'s are non-negative weights. 
We analyze the rate of convergence of a coordinate Gibbs sampler used to simulate from these measures. We show that for every non-zero fixed matrix $C=(c_{ij})$, and large enough $A$, mixing happens in $\Theta(A^2)$ steps in a suitable Wasserstein distance. The upper and lower bounds are explicit and depend on the matrix $C$ through few relevant spectral parameters.
\end{abstract}

\date{\today}

\subjclass[]{}
\keywords{}

\thanks{}
\maketitle


\section{Introduction}
\subsection{Motivation.} The task of quantifying how long a Markov chain must be run to be close to its limiting stationary distribution has a large emerging literature, see e.g. the monographs \cite{levin2017markov} or \cite{meyn2012markov}.
However, the examples that illustrate the theory are often \emph{ad hoc} chains, far from the kind of instances used in day to day scientific work. In many practical cases instead, as Diaconis \cite{aldous2013another} said on Markov chain Monte Carlo (MCMC) methods, \\

\virgolette{\emph{if you take any application of MCMC in a real problem and ask if we theoreticians can give a sensible answer to a practitioner about “how long...,” then we can’t}}.\\ \\
The example studied here was suggested in a work of de Finetti \cite{de1972probability} to get multivariate distributions on $[0,1]^d$, $d\geq 2$, to be used in Bayesian estimation of $d$ binomial random variables. This problem arises in connection with various classical statistical problems, such as drug effectiveness among inhomogeneous populations, as surveyed in \cite{Bacallado2015}. de Finetti was particularly interested in understanding situations where $\bold p=(p_1,\ldots p_d)$, the parameter to be estimated, is approximately concentrated around the main diagonal
\begin{align*}
    \mathcal D:=\{p\bold 1: p\in [0,1]\}\subset [0,1]^d, \quad \bold 1:=(1,\ldots,1).
\end{align*}
This situation is referred to as \emph{almost exchangeability}, as it captures the idea that the $d$ binomial random variables are almost indistinguishable. de Finetti's choice of a prior was
\begin{equation}\label{stationary}
    \pi_{A, C}(\bold p)=\frac{1}{Z} \exp\Big\{-A^2\sum_{i<j}c_{ij}(p_i-p_j)^2\Big\}d\bold p, \quad \bold p\in [0,1]^d,
\end{equation}
where $A$ is a large positive scaling constant, $C:=(c_{ij})_{i,j=1}^d$ is a symmetric matrix with non-negative entries that sum to $1$, and $Z=Z(A,C)$ is a normalizing constant. It is convenient to think of $C$ as a network (weighted graph) with no loops, where for $i\neq j$, the weight $c_{ij}$ captures the \emph{a priori} likelihood that $p_i$ and $p_j$ are close to each other. As an example, if all the off-diagonal weights are the same, $C$ is the (re-scaled) adjacency graph of a complete graph on $d$ vertices. If the graph is connected and $A$ is large, the support of \eqref{stationary} is indeed close to the diagonal $\mathcal D$, thus capturing de Finetti's idea of parameters $p_i$'s which are roughly the same. \\ \\
To understand the effects of the various parameters and the choice of the graph, it is natural to draw samples from the prior and look at how the various choices affect the measure \eqref{stationary}. Despite its innocent aspect, that closely resemble a Discrete Gaussian Free Field (see e.g.\,\cite{Lyons2016}), the constraint $\bold p\in [0,1]^d$ poses some challenges in the computation of the normalizing constant as explained in \cite{Genz1992}.
In \cite{Bacallado2015}, the authors used a Gibbs sampling procedure to understand the measures given by \eqref{stationary}. We now turn to understand the mixing dynamics of this process.
\subsection{A brief overview of Gibbs sampling} The Gibbs sampler -- also known as Glauber dynamics -- is a Markov chain whose single step consists in updating a randomly chosen coordinate according to the conditional distribution given all the others (a more precise definition will be given in Section \ref{sectionresult}). Introduced in \cite{Geman1984}, the Gibbs sampler is now used in virtually all fields of science (see e.g. \cite{1995}). Despite its widespread use, sharp bounds on its rate of convergence have been obtained only in few cases. Results in discrete state spaces are known for the Ising model \cite{berger2005glauber,ding2009mixing,levin2010glauber}, graph colouring \cite{vigoda2000improved,frieze2007survey}, and some integrable models \cite{Diaconis2008b, Khare2009}, via a combinations of spectral and coupling techniques. In continuous state spaces, there are fewer results available, with exceptions such as \cite{Smith2014, Caputo2020}. The mainstream approach via drift and minorization \cite{Rosenthal1995, Jones2001} tends instead to underestimate possible diffusive behaviors in the chain.  \\ \\
In the case of the measure $\pi_{A,C}$ of \eqref{stationary}, the Gibbs sampler consists in replacing a random coordinate in the hypercube with a weighted average of the others, and then perturbing it with a (truncated) normal noise. A drift and minorization conditions leads to an upper bound exponentially bad in the parameter $A$. In \cite{Gerencsr2019}, the first author was able to show that, for $d=2$, $\Theta(A^2)$ steps are necessary and sufficient for this chain to mix in total variation. The chain has a diffusive behavior without exhibiting cutoff (in the sense of \cite{Diaconis1996}), and the projection of the walk onto the main diagonal of the square has a Brownian-like behavior. \\ \\This work aims to generalize the result of \cite{Gerencsr2019} to an arbitrary $d$ and a general matrix $C$, showing how the mixing time, for $A$ large, is still governed by the meandering along the diagonal. While our results are obtained in Wasserstein distance, we conjecture similar bounds in total variation to hold as well. 

\subsection{Structure}
The rest of the paper is structured as follows: in Section \ref{sectionresult} we specify the model and state our main results. Section \ref{toolsection} deals with technical ingredients of the proof: various moment bounds to handle the non-homogeneous nature of the Gibbs sampler, a monotone and contracting grand-coupling, and a control on large deviations of the walk away from the diagonal. In Section \ref{diffusivesection} we verify the core block for the result, an approximate diffusive behavior of a suitable projection of the walk onto the main diagonal.
We then collect all elements to complete the proof of the main results on the mixing time in Section \ref{sectionproof}, and point out open questions and future directions in Section \ref{sectionfuture}.

\section{Model and result}\label{sectionresult}
\subsection{The role of the graph structure}
As explained in the introduction, one can think of $C$ as a network, its entries being the weights on a graph with $d\geq 2$ vertices. It is convenient to start assuming the following.
\begin{hyp}\label{assumption}
The network $C$ is connected with unit total weight. 
\end{hyp}
The normalizing assumption is not restrictive because of the presence of the scaling parameter $A$, and later on we will see how to remove the connectivity constraint. Altogether, Assumption \ref{assumption} guarantees
\begin{equation}\label{nondegenerate}
c_{i}:=\sum\limits_{j\neq i} c_{ij}>0, \quad \sum\limits_{i\in [d]}c_i=1.
\end{equation}
Equivalently, the diagonal matrix $D$ defined by $D_{i,i}:=c_i$ is non-singular with $\tr(D)=1$. The vector $\bold c=(c_i)_{i\in [d]}$ is nothing but the stationary distribution of the random walk on the network. 
\\ \\
The main goal of this paper is to describe the mixing property of the Gibbs sampler in terms of few spectral parameters of the network $C$. To this aim, for $\bold p\in \mathbb R^d$, we introduce the quantities
\begin{equation}\label{LAP}
\widehat{\bold p}:=D^{-1}C\bold p, \quad \Delta\bold p:=(I-D^{-1}C)\bold p=\bold p-\widehat{\bold p}.
\end{equation}
The vector $\hat {\bold p}$ represents the single update distribution of a random walker on the network $C$ who starts with distribution $\bold p$. $\Delta$ is the Laplacian of the network, which is a self-adjoint operator with respect to the inner product induced by the stationary distribution $\bold c$, i.e.,
\begin{equation}\label{inner}
\langle \Delta\bold p,\bold q\rangle=\langle \bold p,\Delta \bold q\rangle,\quad  \quad \langle \bold p,\bold q\rangle:=\bold p^{T}D\bold q=\sum\limits_{i\in [d]}c_ip_iq_i.
\end{equation}
Here, we dropped the dependence on $D$ in the scalar product notation for convenience. Under Assumption \ref{assumption}, the Perron-Frobenius theorem is in force for $D^{-1}C$. As a consequence, the Laplacian has $d$ real eigenvalues that satisfy
\begin{align*}
    0=\lambda_1<\lambda_2\leq\ldots\leq \lambda_d\leq 2,
\end{align*}
with the one dimensional (right) eigenspace, corresponding to $\lambda_1=0$, being spanned by the normalized eigenvector $\bold 1$. 
The connection between the Laplacian and the measure in \eqref{stationary} comes from the identity 
\begin{equation}\label{hereislap}
\sum_{i<j}c_{ij}(p_i-p_j)^2=\langle \bold p,\Delta \bold p\rangle. 
\end{equation}
Our result will be stated in terms of the quantities 
\begin{equation}\label{mainquantities}
\lambda:=\lambda_2,\quad  \gamma:=\max(|1-{\lambda_2}|,|1-{\lambda_d}|),\quad \beta:=\min_{i\in [d]}c_i.
\end{equation}
For a heuristic understanding, recall that $\lambda$ is intimately related to the \emph{connectivity} of the network, $\gamma$ to the \emph{mixing properties} of the associated random walk, and $\beta$ to the \emph{accessibility} of the stationary distribution. In particular, since these parameters can be estimated with various probabilistic and geometric tools, and are stable under small perturbations of the weights $c_{ij}$, our bounds will be robust. 

\subsection{The Gibbs sampler}
In our context, the Gibbs sampler is a Markov chain on $[0,1]^d$, with stationary distribution $\pi_{A,C}$. Given the current state $\bold p=(p_1,\dots,p_d)$, it updates according to the following rules:
\begin{enumerate}
    \item sample $I\in [d]$ uniformly at random;
    \item conditioned on $I=i$, update the $i$-th coordinate of $\bold p$ according to the conditional distribution $\pi_{A,C}(p_1,\ldots, p_{i-1},\cdot,p_{i+1},\ldots p_d)$, where $\pi_{A,C}$ is the measure in \eqref{stationary}.
\end{enumerate}
Conditional on the choice of index $i$ in the first step, the update in the second step can be easily derived combining \eqref{stationary} and \eqref{hereislap}, and gives
\begin{equation}\label{updateonecoord}
    p_i\rightarrow \hat p_i+\epsilon_i (\sigma_i^2,\hat p_i), \quad \sigma_i^2=\sigma_i^2(A,C):=\frac{1}{2A^2c_i}.
\end{equation}
Here, $\epsilon(\sigma^2, p)$ denotes a normal random variable with zero mean, variance $\sigma^2$ and conditioned to lie in $[-p,1-p]$. If $c_i=\infty$ (i.e., $\sigma_i=\infty$) we follow the convention that $\epsilon(\infty, p)$ is uniformly distributed on $[-p,1-p]$. Sampling from a truncated normal random variable can be done efficiently (see e.g.\,\cite{Chopin2010}), and the usefulness of the Gibbs sampler is then related to its rate of convergence to stationarity. \\ \\ Formula \eqref{updateonecoord} shows that the walk consists of a \emph{non-local} move (replacing a random coordinate with a weighted average of all the others), and a \emph{non-homogeneous diffusive} move (adding a truncated normal noise). The non-locality creates some difficulties in approximating the walk with a diffusion, and the lack of homogeneity requires to take care of the effect of the boundary of the hypercube. Intuitively, for large $A$ one expects the behavior of the chain at the two corners $\bold 0$ and $\bold 1$ -- where coordinates are clustered together and in the vicinity of the boundary of the hypercube --  to represent the main challenge.
\begin{remark}\label{IPS}
One can think of the Gibbs sampler as the dynamics on an interacting system with $d$ particles constrained to lie in $[0,1]$. In this language, our analysis concerns the mixing time in the \emph{low temperature} regime before taking a thermodynamic limit (i.e. for a fixed number of particles).
\end{remark}
We now write \eqref{updateonecoord} in a more compact form. Let $\Pi_I$ be the (random) projection onto the $I$-th basis vector, where $I$ is chosen uniformly at random in $[d]$. For $\bold p\in [0,1]^d$ let $\boldsymbol{\epsilon}(\Sigma,\bold p)$ be a normal vector with mean $\bold 0$, covariance matrix $\Sigma$ and conditioned to lie in $[-\bold p,\bold 1-\bold p]\subset \mathbb R^d$. Owing to \eqref{LAP} and \eqref{updateonecoord}, conditioned on $I=i$,  we can write the update of one step of the Gibbs sampler as 
\begin{equation}\label{totalstep}
    \bold p\rightarrow \bold p+\Pi_i\Big[-\Delta \bold p+\boldsymbol{\epsilon}(\Sigma,\hat{\bold p})\Big], \quad \Sigma=\Sigma(A,C):=\frac{D^{-1}}{2A^2}.
\end{equation}
We will denote by $K_{A,C}^{*k}(\bold p)$ the law of a random variable obtained after performing $k$ steps of the Gibbs sampler, starting from the point $\bold p$. By $K_{A,C}(\bold p)$, we will mean the single step update.
\subsection{Main results}
Our result is expressed in term of $\infty$-Wasserstein distance. If $\mu, \nu$ are Borel probability measures on $[0,1]^d$, we define
\begin{equation}\label{metricwecare}
    d_{\infty}(\mu,\nu):=\min_{\bold p\sim \mu, \bold q\sim \nu}\mathbb E\Big[\|\bold p-\bold q\|_{\infty}\Big].
\end{equation}
Here, the minimum is taken over all couplings of $\bold p$ and $\bold q$ with marginals $\mu$ and $\nu$, respectively, and $\|\bold p-\bold q\|_{\infty}=\sup_{i\in [d]}|p_i-q_i|$. The existence of a minimizer in \eqref{metricwecare} is a known fact (see e.g. Theorem $4.1$ in \cite{villani2008optimal}).
We can now state our main theorem. 
\begin{theorem}\label{MAINTHEOREM}
Let $C$ be a network on $d\geq 2$ vertices satisfying Assumption \ref{assumption}, and let $\lambda, \gamma, \beta$ be defined as in \eqref{mainquantities}. Then, for any $\delta>0$, there exists $m=m(\delta), M=M(\delta)$ such that, for all $A\geq A^*(\delta,\beta,\lambda)$, one has:
\begin{itemize}
    \item if $k\leq m\frac{\lambda}{\gamma}A^2$, then
    \begin{align*}
        \sup_{\bold p\in [0,1]^d}d_{\infty}(K_{A,C}^{*k}(\bold p),\pi_{A,C})\geq \frac{1}{4}-\delta; 
    \end{align*}
    \item if $k\geq M dA^2$, then 
    \begin{align*}
    \sup_{\bold p\in [0,1]^d}d_{\infty}(K_{A,C}^{*k}(\bold p),\pi_{A,C})\leq \delta.
    \end{align*}
\end{itemize}
\end{theorem}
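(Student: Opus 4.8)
The proof of Theorem~\ref{MAINTHEOREM} splits naturally into a lower bound and an upper bound, and in both directions the strategy is to reduce the $d$-dimensional dynamics to an essentially one-dimensional diffusive problem along the diagonal $\mathcal D$. The plan is to introduce the scalar statistic $\bar p := \langle \bold p, \bold 1\rangle = \sum_i c_i p_i$, the projection of the walk onto the diagonal in the inner product \eqref{inner}. Because $\bold 1$ is the $\lambda_1=0$ eigenvector of $\Delta$, a single Gibbs step changes $\bar p$ only through the noise term $\Pi_i \boldsymbol\epsilon(\Sigma,\hat{\bold p})$, while the ``transversal'' component $\bold p - \bar p\bold 1$ is contracted by the factor governed by $\gamma$; the monotone contracting grand-coupling and the moment/large-deviation bounds announced in Section~\ref{toolsection} are exactly what is needed to make this decoupling quantitative. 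First I would record that after $O(\gamma^{-1}\lambda^{-1}\cdot\log)$-type burn-in the transversal part is negligible with high probability, so both measures of interest are, up to $d_\infty$-error $\delta$, supported near $\mathcal D$ and determined by the law of $\bar p$.

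For the \textbf{lower bound}, I would run two copies of the chain from the two extreme diagonal points $\bold 0$ and $\bold 1$ (or, to get the constant $1/4$, compare a single chain started at $\bold 0$, say, with stationarity). The key point is that $\bar p$ evolves like a bounded martingale whose quadratic variation accumulates at rate $\Theta(A^{-2})$ per step (each coordinate update injects variance $\sigma_i^2 = 1/(2A^2 c_i)$ with probability $1/d$, and the truncation only helps near the corners), so after $k \le m\frac{\lambda}{\gamma}A^2$ steps with $m$ small the displacement of $\bar p$ is $O(\sqrt{m\lambda/\gamma})$, too small for the chain started at a corner to have equilibrated: its $\bar p$ is still concentrated near $0$ or $1$, whereas under $\pi_{A,C}$ the statistic $\bar p$ has a spread of order $1$ across $[0,1]$. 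Quantifying ``the chain has not moved far enough'' via a one-sided Chebyshev/maximal inequality for the martingale $\bar p$, and comparing with the law of $\bar p$ under $\pi_{A,C}$ (which one can pin down using \eqref{stationary} and the fact that along $\mathcal D$ the density is flat, so $\bar p$ is roughly uniform), yields the $\tfrac14-\delta$ gap; the role of $\lambda$ here is that the relevant diagonal ``speed'' has to be measured after the transversal modes have relaxed, which takes $\sim \lambda^{-1}$ steps, hence the $\lambda/\gamma$ scaling.

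For the \textbf{upper bound}, I would use the grand-coupling from Section~\ref{toolsection}: couple the chain started at arbitrary $\bold p$ with a chain started at stationarity, using common index choices $I$ and common noise. The transversal components contract geometrically and, by the moment bounds, are uniformly $O(A^{-1})$-small after $O(dA^2)$ steps regardless of starting point (the corners $\bold 0,\bold 1$ being the worst case because of truncation, which is where the $\beta$-dependence and the threshold $A^*(\delta,\beta,\lambda)$ enter). It then remains to couple the two diagonal statistics $\bar p$ and $\bar q$: since both now follow the same near-martingale diffusive dynamics with step-variance $\Theta(A^{-2})$ and feel reflecting-type boundary behavior at $0$ and $1$, a reflection coupling of the one-dimensional pieces meets in $O(A^2)$ steps with probability $1-\delta$ — this is the diffusive-behavior ``core block'' the excerpt flags in Section~\ref{diffusivesection}. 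Once both the transversal and the diagonal parts are coupled, $\|\bold p - \bold q\|_\infty \le \delta$, giving the $MdA^2$ bound.

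The main obstacle, and the part requiring the most care, is the \textbf{non-homogeneity near the corners}: the update \eqref{totalstep} is a \emph{truncated} Gaussian, so the per-step mean is not exactly zero (the martingale property for $\bar p$ is only approximate) and the per-step variance degrades when a coordinate is pressed against $0$ or $1$. Controlling this requires the large-deviation estimate that the walk does not spend too long with coordinates stuck at the boundary — equivalently that the effective diffusivity of $\bar p$ stays bounded below by a constant times $A^{-2}$ uniformly in time — and that the drift bias it introduces is summably small over the $O(A^2)$-step horizon. This is precisely where the constant $\beta = \min_i c_i$ and the requirement $A \ge A^*(\delta,\beta,\lambda)$ are unavoidable, and making the one-dimensional comparison (e.g.\ via a coupling with an honest reflected Brownian motion, or a direct martingale argument with corrected drift) robust against this boundary effect is the crux of the proof.
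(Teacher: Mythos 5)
Your plan has the right broad shape for the lower bound — compare the concentration of the barycenter started from a deterministic point with the anti-concentration of the barycenter under $\pi_{A,C}$ — and you correctly identify the clustering estimate and the monotone coupling as the key tools. But there are two genuine problems.

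\textbf{The barycenter is not driven only by the noise.} You claim that ``a single Gibbs step changes $\bar p$ only through the noise term $\Pi_i\boldsymbol\epsilon$'' because $\bold 1$ is a zero eigenvector of $\Delta$. That is false. Writing one step as $\bold p\mapsto\bold p+\Pi_i(-\Delta\bold p+\boldsymbol\epsilon)$ and using $\langle\Pi_i\bold v,\bold 1\rangle=c_iv_i$, the barycenter changes by $c_i\bigl(-(\Delta p)_i+\epsilon_i\bigr)$, and $c_i(\Delta p)_i$ is generically nonzero. Only the average over the random index $i$ vanishes, because $\frac{1}{d}\sum_i c_i(\Delta p)_i=\frac{1}{d}\langle\Delta\bold p,\bold 1\rangle=0$. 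So $\bar p$ is a near-martingale in expectation over $i$, but its quadratic variation receives a contribution $\frac{1}{d}\langle D\Delta\bold p,\Delta\bold p\rangle$ from the contraction term, on top of the noise. The paper handles exactly this extra term (it is $E_2$ in Lemma~\ref{barycenterdidnottravel}), bounding it via the excursion estimate $\mathbb E\langle\bold p,\Delta\bold p\rangle\lesssim d/(\lambda A^2)$ (Lemma~\ref{lemmaexcursion}) together with $\max_i c_i\le\gamma$ (Lemma~\ref{cutefact}). That, not the ``transversal relaxation time,'' is where the factor $\lambda/\gamma$ actually comes from: $\gamma$ bounds the worst-case per-step barycenter jump, and $\lambda$ enters through the clustering bound. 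Your argument as written would under-count the quadratic variation and would not produce the correct spectral constants. Relatedly, the paper starts the lower-bound chain at $\tfrac12\bold 1$ rather than a corner precisely so that, by reflection symmetry, $\mathbb E\bar p(k)\equiv\tfrac12$ exactly and one needs no drift correction; starting from $\bold 0$ as you suggest re-introduces a truncation drift you would then have to control separately.

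\textbf{The upper bound by reflection coupling does not fit the framework.} The paper never couples $\bar p$ with $\bar q$ directly. Instead it exploits the \emph{monotone} grand-coupling of Lemma~\ref{Coup}: run four walkers — from $\bold p_0$, from $\bold 0$, from $\bold 1$, and from stationarity — with the same index and the same noise, so that coordinatewise order and the $\ell^\infty$-gap are preserved. Then it waits for the time $T'_\delta$ at which the $\bold 0$-started walker reaches within $2\delta$ of the opposite corner $\bold 1$; at that moment all four walkers are sandwiched within $\ell^\infty$-distance $2\delta$, and monotonicity keeps them there. The cost of this is a hitting-time bound, obtained by the sub-martingale argument of Lemma~\ref{stoppingtime} combined with the large-deviation estimate of Lemma~\ref{largedeviations}. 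Your reflection-coupling proposal has several obstacles it does not address: reflection coupling of the diagonal statistic is incompatible with the common-noise monotone coupling you simultaneously invoke for the transversal contraction; $\bar p$ is not Markov (its increments depend on the whole state $\bold p$ through $\Delta\bold p$ and through the truncation), so a one-dimensional reflected-Brownian comparison is not directly available; and even if $\bar p$ and $\bar q$ met, the transversal parts, which are driven by the same noise you just reflected, would not be coupled. The paper's sandwiching argument is designed precisely to avoid these issues, at the price of targeting Wasserstein rather than total variation.
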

\begin{remark}
Considering the lower bound, one can see that $\frac{1}{4}=d_{\infty}(\delta_{\frac{1}{2}\bold 1}, \delta_{\mathcal D})$, where $\delta_{\mathcal D}$ is the uniform measure on the main diagonal. We conjecture that it is possible to replace it with $\frac{1}{2}=d_{\infty}(\delta_{\bold 0}, \delta_{\mathcal D})$.
\end{remark}
If the network is disconnected, the measure in \eqref{stationary} becomes a product measure over the different components. Similarly, if two coordinates in different connected components are selected, the corresponding moves of the Gibbs sampler are independent of each other. By means of some simple concentration inequalities, this leads to the following generalization of Theorem \ref{MAINTHEOREM}.
\begin{theorem}\label{MAINTHEOREMDISC}
Let $C$ be a network with at least one positive weight. Then for every $\delta>0$, there exists $\alpha=\alpha(C,\delta)$, $\alpha'=\alpha'(C,\delta)$ such that, for all $A\geq A^*(C,\delta)$, one has:
\begin{itemize}
    \item if $k\leq \alpha A^2$, then
    \begin{align*}
        \sup_{\bold p\in [0,1]^d}d_{\infty}(K_{A,C}^{*k}(\bold p),\pi_{A,C})\geq \frac{1}{4}-\delta;
    \end{align*}
    \item if $k\geq \alpha' A^2$, then
    \begin{align*}
        \sup_{\bold p\in [0,1]^d}d_{\infty}(K_{A,C}^{*k}(\bold p),\pi_{A,C})\leq \delta.
    \end{align*}
\end{itemize}
\end{theorem}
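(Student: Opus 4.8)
\emph{Proof strategy.} The plan is to reduce Theorem \ref{MAINTHEOREMDISC} to Theorem \ref{MAINTHEOREM} applied separately to each connected component; the only ingredient not already in Theorem \ref{MAINTHEOREM} is an elementary concentration estimate for the random clock that records how often each component has been updated. Decompose $[d]=G_1\sqcup\dots\sqcup G_r$ into the connected components of the graph of strictly positive weights (isolated vertices forming singletons), and for each $\ell$ set $W_\ell:=\sum_{i,j\in G_\ell}c_{ij}$, $C_\ell:=C|_{G_\ell}/W_\ell$ and $A_\ell:=A\sqrt{W_\ell}$. Inspecting \eqref{stationary} and \eqref{updateonecoord} shows that $\pi_{A,C}=\bigotimes_{|G_\ell|\ge 2}\pi_{A_\ell,C_\ell}\otimes\bigotimes_{|G_\ell|=1}\mathrm{Unif}[0,1]$, that each $(A_\ell,C_\ell)$ with $|G_\ell|\ge 2$ satisfies Assumption \ref{assumption}, and that the update \eqref{updateonecoord} of a coordinate of $G_\ell$ involves only the other coordinates of $G_\ell$. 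Writing $I_1,I_2,\dots$ for the i.i.d.\ uniform driving indices and $N_\ell(k):=\#\{j\le k: I_j\in G_\ell\}\sim\mathrm{Bin}(k,|G_\ell|/d)$, it follows that the $G_\ell$-coordinates of $K_{A,C}^{*k}(\bold p)$ are, jointly in $\ell$, distributed as $N_\ell(k)$ steps of the sampler $K_{A_\ell,C_\ell}$ started at $\bold p|_{G_\ell}$ (a single uniform resampling once $N_\ell(k)\ge 1$, in the singleton case), the component chains being mutually independent and independent of $(N_1(k),\dots,N_r(k))$. Since $\|\bold p-\bold q\|_\infty=\max_\ell\|\bold p|_{G_\ell}-\bold q|_{G_\ell}\|_\infty$, coupling components independently and using $\mathbb E[\max_\ell\,\cdot\,]\le\sum_\ell\mathbb E[\,\cdot\,]$ gives the two-sided bound
\begin{equation*}
\max_\ell d_\infty(\mu_\ell,\nu_\ell)\ \le\ d_\infty\!\Big(\textstyle\bigotimes_\ell\mu_\ell,\ \bigotimes_\ell\nu_\ell\Big)\ \le\ \sum_\ell d_\infty(\mu_\ell,\nu_\ell),
\end{equation*}
and more generally $d_\infty(\mu,\nu)\ge d_\infty(\mu_G,\nu_G)$ for the marginals on the coordinates of any $G\subseteq[d]$, which is all that is needed to pass between the componentwise and the global statements.

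For the upper bound, fix $\delta$, set $\delta':=\delta/(2r)$, and run $k\ge\alpha'A^2$ steps. Conditionally on $(I_1,\dots,I_k)$, Chernoff bounds for the $N_\ell(k)$ give, for $A\ge A^*(C,\delta)$, probability at least $1-\delta/2$ that simultaneously $N_\ell(k)\ge\tfrac12\alpha'A^2|G_\ell|/d$ for every non-singleton $\ell$ and $N_\ell(k)\ge 1$ for every singleton $\ell$. Taking $\alpha'=\alpha'(C,\delta)$ large enough that $\tfrac12\alpha'|G_\ell|/d\ge M(\delta')|G_\ell|W_\ell$ for all non-singleton $\ell$ — with $M(\cdot)$ from Theorem \ref{MAINTHEOREM} — on that event $N_\ell(k)\ge M(\delta')|G_\ell|A_\ell^2$, so Theorem \ref{MAINTHEOREM} yields $d_\infty\big(K_{A_\ell,C_\ell}^{*N_\ell(k)}(\bold p|_{G_\ell}),\pi_{A_\ell,C_\ell}\big)\le\delta'$ for each non-singleton component, while each singleton component is already exactly stationary. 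Conditioning on $(I_1,\dots,I_k)$ and coupling the (conditionally independent) components to $\pi_{A,C}$ independently, the conditional expected $\ell^\infty$ distance is at most $r\delta'=\delta/2$ on this event and at most $1$ off it; hence $\sup_{\bold p}d_\infty(K_{A,C}^{*k}(\bold p),\pi_{A,C})\le\delta$ with this $\alpha'=\alpha'(C,\delta)$.

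For the lower bound it suffices to exhibit a single starting point with $d_\infty\ge\tfrac14-\delta$, and by $d_\infty(\mu,\nu)\ge d_\infty(\mu_G,\nu_G)$ it is enough to work inside one non-singleton component $G=G_\ell$ (which exists since $C$ has a positive weight), of total weight $W:=W_\ell$ and parameters $C_\ell$, $A':=A\sqrt{W}$, $\lambda',\gamma'$. Take $\bold p$ equal on $G$ to the extremal configuration of Theorem \ref{MAINTHEOREM} for $C_\ell$ — the centre $\tfrac12\bold 1$ — and arbitrary elsewhere, and run $k\le\alpha A^2$ steps with $\alpha=\alpha(C,\delta)$ small enough that $\alpha|G|/d\le\tfrac12 m(\delta)\tfrac{\lambda'}{\gamma'}W$; then $\mathbb E N_\ell(k)\le\tfrac12 m(\delta)\tfrac{\lambda'}{\gamma'}(A')^2$, and by a Chernoff bound $N_\ell(k)$ stays below the lower-bound threshold $m(\delta)\tfrac{\lambda'}{\gamma'}(A')^2$ of Theorem \ref{MAINTHEOREM}, uniformly in $k\le\alpha A^2$, with probability $1-o_A(1)$. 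The $G$-marginal of $K_{A,C}^{*k}(\bold p)$ is a mixture over $N_\ell(k)$ of the laws $K_{A_\ell,C_\ell}^{*n}(\bold p|_G)$; by the diffusive estimates behind the lower bound of Theorem \ref{MAINTHEOREM} (Section \ref{diffusivesection}), for each $n$ below the threshold the projection of $K_{A_\ell,C_\ell}^{*n}(\bold p|_G)$ onto the diagonal direction (a $1$-Lipschitz functional for $\|\cdot\|_\infty$) is concentrated within $\delta$ of $\tfrac12$, hence so is the mixture up to the $o_A(1)$ mass above the threshold, whereas under $\pi_{A_\ell,C_\ell}$ that projection lies within $\delta$ of $\mathrm{Unif}[0,1]$. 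Since $d_\infty$ dominates the $1$-Wasserstein distance between these one-dimensional pushforwards and $W_1(\delta_{1/2},\mathrm{Unif}[0,1])=\tfrac14$, one obtains $d_\infty(K_{A,C}^{*k}(\bold p),\pi_{A,C})\ge\tfrac14-\delta$ for $A\ge A^*(C,\delta)$ after relabelling $\delta$. The only point going beyond a black-box use of Theorem \ref{MAINTHEOREM} is this replacement of a deterministic number of steps by the random counts $N_\ell(k)$; I expect it to be the one mildly delicate step, but it is controlled by the Chernoff estimates above together with the fact that below the mixing threshold the diagonal functional remains tightly concentrated near its value at the starting point irrespective of how many steps are taken, so I do not anticipate any essential difficulty.
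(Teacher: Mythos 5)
Your proof is correct and follows essentially the same route as the paper: decompose into connected components, use that both $\pi_{A,C}$ and the Gibbs dynamics factor across components, control the random per-component update counts $N_\ell(k)$ by binomial concentration, and invoke Theorem \ref{MAINTHEOREM} on each component after absorbing the component's total weight into $A$. Two small remarks on the details: for the lower bound your Chernoff step is actually superfluous, since $N_\ell(k)\le k\le\alpha A^2$ holds deterministically, so with the paper's choice of $\alpha$ every realization of $N_\ell(k)$ is already below the component threshold and one can simply average the estimate of Lemma \ref{barycenterdidnottravel} over $N_\ell(k)$ — this is the clean way to handle the "delicate step" you flagged; and in the upper bound your use of $\delta'=\delta/(2r)$ together with the bound $d_\infty(\otimes_\ell\mu_\ell,\otimes_\ell\nu_\ell)\le\sum_\ell d_\infty(\mu_\ell,\nu_\ell)$ is slightly more careful than the paper's displayed $\sup_{\mathcal C}$, which as written should be a sum (or come with the same $\delta/r$ adjustment you made).
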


Informally, our results shows that for a fixed non-zero network $C$, similarly to the two dimensional case analyzed in \cite{Gerencsr2019}, the Gibbs sampler mixes in about $A^2$ steps with no cutoff. As an example, consider the case where the underlying network is the complete graph with uniform weights. In this case, the Laplacian is maximally symmetric on the orthogonal complement of the vector $\bold 1$. In particular, 
\begin{align*}
\lambda=\frac{d}{d-1}, \quad \gamma=\frac{1}{d-1}, \quad \frac{\lambda}{\gamma}=d.
\end{align*}
Therefore, according to Theorem \ref{MAINTHEOREM}, $\Theta(dA^2)$ are necessary and sufficient to mix. The result is in accordance with Figures \ref{fig:variancegrowth} and \ref{fig:hitallrange}, that illustrate the evolution of \eqref{barycenterdef} and \eqref{relevantstopping}: during the proof, these statistics will be shown to be the relevant ones in determining the mixing time.\\ \\
For more general networks, quantities like $\lambda$ and $\gamma$ can be also estimated by geometric-analytic methods, such as Cheeger inequalities \cite{cheeger1969lower} or Poincair\'e/Nash-Sobolev inequalities \cite{diaconis1991geometric}. Our proof gives explicit bounds on the size of $A^*$ in terms of these quantities. For the usefulness of Theorem \ref{MAINTHEOREM} when dealing with networks other than the complete graph, as well as the issues with dealing with total variations, we refer the reader to Section $6$.
\section{Technical components}\label{toolsection}
The quantities defined in \eqref{mainquantities} allow to relate different useful quadratic forms. More precisely, define
\begin{align*}
    \mathcal D^{\perp}:=\{\bold p\in \mathbb R^d: \langle \bold p,\bold 1\rangle =0\}.
\end{align*}
Then, for $\bold p\in \mathcal D^{\perp}$ the spectral theorem for $\Delta$ and $D^{-1}C$ entails
\begin{equation}\label{spectralbounds}
    \langle \bold p, \Delta \bold p\rangle \geq \lambda \langle \bold p,\bold p\rangle, \quad\quad  \langle \widehat {\bold p},\widehat {\bold p}\rangle=\langle(I-\Delta)\bold p,(I-\Delta)\bold p \rangle\leq \gamma^2 \langle \bold p,\bold p\rangle.
\end{equation}
In particular, since $\Delta^{\frac{1}{2}}\bold p\in\mathcal D^{\perp}$ for all $\bold p$, the first bound readily implies
\begin{equation}\label{lambdastuff}
    \langle \Delta\bold p,\Delta \bold p \rangle =\langle \Delta^{\frac{1}{2}}\bold p,\Delta\Delta^{\frac{1}{2}}\bold p\rangle \geq \lambda \langle \bold p, \Delta\bold p \rangle,\quad \bold p\in\mathbb R^d.
\end{equation}
The following bound will also be useful. 
\begin{lemma}\label{cutefact}
If $C$ satisfies Assumption \ref{assumption}, then
\begin{align*}
    \max_{i\in [d]} c_i\leq \gamma. 
\end{align*}
\end{lemma}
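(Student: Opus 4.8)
The plan is to test the spectral bound against a well-chosen vector of $\mathcal D^{\perp}$. The key preliminary observation is that, by the spectral theorem for $\Delta$ (whose kernel is spanned by $\bold 1$), the operator $I-\Delta=D^{-1}C$ restricted to $\mathcal D^{\perp}$ is self-adjoint for $\langle\cdot,\cdot\rangle$ with eigenvalues $1-\lambda_2,\dots,1-\lambda_d$; since $x\mapsto|1-x|$ is convex, each of them has absolute value at most $\gamma=\max(|1-\lambda_2|,|1-\lambda_d|)$, so that
\[
|\langle (I-\Delta)\bold p,\bold p\rangle|\le \gamma\,\langle \bold p,\bold p\rangle,\qquad \bold p\in\mathcal D^{\perp}.
\]

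Next I would fix $i\in[d]$ and apply this bound to $\bold p:=\bold e_i-c_i\bold 1$, where $\bold e_i$ denotes the $i$-th standard basis vector. Using $\langle \bold e_i,\bold 1\rangle=\bold e_i^{T}D\bold 1=c_i$ and $\langle \bold 1,\bold 1\rangle=\tr(D)=1$, one checks that $\langle \bold p,\bold 1\rangle=0$, hence $\bold p\in\mathcal D^{\perp}$, and that $\langle \bold p,\bold p\rangle=c_i-2c_i^{2}+c_i^{2}=c_i(1-c_i)$, which is strictly positive because $0<c_i<1$ by \eqref{nondegenerate} (this is where $d\ge 2$ enters). For the other quadratic form, since $(I-\Delta)\bold 1=\bold 1$ we have $(I-\Delta)\bold p=D^{-1}C\bold e_i-c_i\bold 1$, so expanding,
\[
\langle (I-\Delta)\bold p,\bold p\rangle=\langle D^{-1}C\bold e_i,\bold e_i\rangle-c_i\langle D^{-1}C\bold e_i,\bold 1\rangle-c_i\langle \bold 1,\bold e_i\rangle+c_i^{2}\langle \bold 1,\bold 1\rangle.
\]
The crucial term is $\langle D^{-1}C\bold e_i,\bold e_i\rangle=\bold e_i^{T}C\bold e_i=c_{ii}=0$, which vanishes precisely because the network has no loops; together with $\langle D^{-1}C\bold e_i,\bold 1\rangle=\bold e_i^{T}C\bold 1=c_i$ and $\langle \bold 1,\bold e_i\rangle=c_i$, this gives $\langle (I-\Delta)\bold p,\bold p\rangle=-c_i^{2}$.

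Finally, substituting into the spectral bound yields $c_i^{2}\le \gamma\, c_i(1-c_i)$, i.e.\ $c_i\le \gamma(1-c_i)\le\gamma$; maximizing over $i$ proves the lemma (and in fact the sharper statement $\max_i c_i\le\gamma/(1+\gamma)$, with equality for the uniformly weighted complete graph). I do not anticipate a genuine obstacle here: the only idea needed is the choice of the test vector $\bold e_i-c_i\bold 1$ combined with the loop-free condition $c_{ii}=0$, and everything else is routine bookkeeping with the inner product \eqref{inner}.
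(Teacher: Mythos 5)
Your proof is correct and takes essentially the same route as the paper: your test vector $\bold e_i-c_i\bold 1$ is, after normalizing by $\sqrt{c_i(1-c_i)}$, exactly the vector $\bigl(\sqrt{(1-c)/c},-\sqrt{c/(1-c)},\dots\bigr)$ the paper plugs into the spectral bound, and the computation $\langle\hat{\bold p},\bold p\rangle=-c_i^2$ matches the paper's $\langle\bold p,\hat{\bold p}\rangle=-c/(1-c)$ up to that scaling. Your remark that the argument actually yields the sharper $\max_i c_i\le\gamma/(1+\gamma)$ (with equality on the uniform complete graph) is a nice observation, but it is implicit in the paper's own inequality $c/(1-c)\le\gamma$ as well.
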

\begin{proof}
Without loss of generality, assume $\max\limits_{i\in [d]} c_i=c_1=:c$.  Consider the vector 
\begin{align*}
\bold p=\Big(\sqrt\frac{1-c}{c},-\sqrt\frac{c}{1-c}, \dots,-\sqrt\frac{c}{1-c}\Big).
\end{align*}
An easy computation shows $\langle \bold p,\bold p\rangle=1, \langle \bold p,\bold 1\rangle=0$, as well as
\begin{align*}
    \langle \bold p,\hat{\bold p}\rangle=-\frac{c}{1-c}.
\end{align*}
Therefore, we obtain
\begin{align*}
    \max_{i\in [d]} c_i=c\leq \Big|-\frac{c}{1-c}\Big|=|\langle \bold p,\hat{\bold p}\rangle|\leq \gamma,
\end{align*}
where the last bound follows from the spectral theorem for $D^{-1}C$ and \eqref{spectralbounds}. 
\end{proof}
\subsection{Truncated normal distributions}
Recall that, for $\sigma>0$ and $p\in [0,1]$, we denote by $\epsilon(\sigma^2,p)$ a normal random variable with zero mean, variance $\sigma^2$, which is conditioned to lie in $[-p,1-p]$. When $\sigma$ is small, the truncation has little effect as long as $p$ is in the inner part of $[0,1]$. We quantify this effect by means of uniform bounds on the moments of $\epsilon(\sigma^2,p)$. Here and after, we will always denote by $f(x), F(x)$ the density and distribution function, respectively, of a standard normal random variable, that will be denoted by $Z$. 
\begin{lemma}\label{truncatedstuff}
Let $\epsilon(\sigma^2,p)$ be a truncated normal random variable. Then the following facts hold:
\begin{enumerate}
    \item for all $p\in [0,1]$ and $\sigma>0$, one has  $\epsilon(\sigma^2,p)\stackrel{d}{=}- \epsilon(\sigma^2,1-p)$;
    \item for any $\sigma>0$ and $p\in [0,\frac{1}{2}]$,
    \begin{equation}\label{explicitmean}
    0\leq E(\epsilon(\sigma^2,p))\leq 2\sigma e^{-\frac{p^2}{2\sigma^2}};
    \end{equation}
    \item for any $\sigma>0$ and $p\in [\frac{1}{2},1]$,
    \begin{align*}
    -2\sigma e^{-\frac{(1-p)^2}{2\sigma^2}}\leq E(\epsilon(\sigma^2,p))\leq 0;
    \end{align*}
    \item For any $\sigma>0, p\in [0,1]$,
    \begin{equation}\label{varup}
        \Var(\epsilon(\sigma^2, p))\leq \sigma^2, \quad 
        \mathbb E\Big(\epsilon^2(\sigma^2,p)\Big)\leq 5\sigma^2. 
    \end{equation}
    \item There exists $\rho\in (0,1)$ such that, for all $\sigma\leq 1$ and all $p\in [0,1]$, one has
    \begin{equation}\label{vardown}
        \Var(\epsilon(\sigma^2,p))\geq \rho \sigma^2.
    \end{equation}
\end{enumerate}
\end{lemma}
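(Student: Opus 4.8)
The plan is to reduce everything to explicit one-dimensional Gaussian integrals and standard tail estimates for $f$ and $F$. Let me set up notation: the density of $\epsilon(\sigma^2,p)$ is $\sigma^{-1}f(x/\sigma)/\big(F((1-p)/\sigma)-F(-p/\sigma)\big)$ on $[-p,1-p]$. I would handle the five parts in order, since the later ones reuse the moment formulas from the earlier ones.

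\emph{Part (1)} is immediate: if $X\sim\epsilon(\sigma^2,p)$, then $-X$ has a Gaussian density truncated to $[-(1-p),p]=[-(1-p),1-(1-p)]$, which is exactly the law of $\epsilon(\sigma^2,1-p)$. \emph{Part (3)} follows from (1) and (2): $E(\epsilon(\sigma^2,p))=-E(\epsilon(\sigma^2,1-p))$ and $1-p\in[0,\tfrac12]$ when $p\in[\tfrac12,1]$. So the real content is in (2), (4), (5). For \emph{part (2)}, write $a=p/\sigma$, $b=(1-p)/\sigma$, so $b\ge a\ge 0$ since $p\le\tfrac12$. Using $\int x f(x)\,dx=-f(x)$, the mean equals $\sigma\,\dfrac{f(-a)-f(b)}{F(b)-F(-a)}=\sigma\,\dfrac{f(a)-f(b)}{F(b)-F(-a)}\ge 0$ because $f(a)\ge f(b)$. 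For the upper bound, I would bound the denominator $F(b)-F(-a)\ge F(0)-F(-a)=F(a)-\tfrac12$ from below, and the numerator by $f(a)-f(b)\le f(a)$. So it suffices to show $\dfrac{f(a)}{F(a)-\tfrac12}\le 2e^{-a^2/2}$, i.e. $F(a)-\tfrac12\ge \tfrac12 f(a)e^{a^2/2}=\tfrac1{2\sqrt{2\pi}}\ge \tfrac1{6}$; since $F(a)-\tfrac12\ge F(0)$-type reasoning is too crude for small $a$, I would instead use $F(a)-\tfrac12=\int_0^a f(t)\,dt\ge a f(a)$ together with the elementary inequality $a f(a)\cdot e^{a^2/2}$ is not uniformly bounded below — so more carefully: split into $a\le 1$ and $a\ge 1$. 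For $a\ge 1$, $F(a)-\tfrac12\ge F(1)-\tfrac12>0.34$ and $f(a)\le f(1)<0.25$, giving ratio $<0.74\le 2e^{-1/2}\approx1.21$, and multiplying by $e^{a^2/2}\ge e^{1/2}$ the bound $2$ comfortably holds after checking monotonicity; for $a\le 1$, $F(a)-\tfrac12\ge a f(1)$ and $f(a)\le f(0)=\tfrac1{\sqrt{2\pi}}$, so the ratio is $\le \tfrac{1}{a\sqrt{2\pi}f(1)}$ — this blows up, so the $e^{-a^2/2}\le 1$ factor is what saves us only if we keep $f(b)$; here the sharper route is $f(a)-f(b)=f(a)(1-e^{-(b^2-a^2)/2})\le f(a)\cdot\tfrac{b^2-a^2}{2}$ and $b^2-a^2=(1-2p)/\sigma^2\le 1/\sigma^2$, combined with the denominator bound $F(b)-F(-a)\ge$ (a constant times $\min(1,(b+a)f(\max))$); I expect the clean published argument pushes the $e^{-p^2/2\sigma^2}=e^{-a^2/2}$ factor out front by bounding numerator by $2\sigma f(a/\sigma\cdot\sigma)$-type manipulation and denominator below by an absolute constant once $\sigma$ is small, with the trivial bound $E\le$ (length of interval) $\le 1$ covering the non-small-$\sigma$ case.

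For \emph{part (4)}: $\Var\le\sigma^2$ is the standard fact that conditioning a Gaussian to an interval can only decrease the variance (truncation to a convex set reduces variance — provable via the Brascamp–Lieb inequality, or directly since the truncated density is log-concave with second derivative of $-\log$ density equal to $1/\sigma^2$, so by a Poincaré-type bound $\Var\le\sigma^2$). Then $E(\epsilon^2)=\Var+E(\epsilon)^2\le\sigma^2+ (2\sigma e^{-p^2/2\sigma^2})^2\le\sigma^2+4\sigma^2 =5\sigma^2$ using part (2)/(3) and $e^{-p^2/2\sigma^2}\le 1$. For \emph{part (5)}, which I expect to be the main obstacle, I need a uniform lower bound $\Var(\epsilon(\sigma^2,p))\ge\rho\sigma^2$ valid for all $\sigma\le 1$ and $p\in[0,1]$. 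By the symmetry in (1) assume $p\le\tfrac12$. The worst case is $p=0$ (the half-line truncation, most asymmetric) with $\sigma=1$; here $\epsilon$ is a standard Gaussian conditioned to $[0,1]$, which has some fixed positive variance $\rho_0$. The strategy is: (i) for fixed $\sigma\le 1$, show $p\mapsto\Var$ is minimized near $p=0$ by a monotonicity/continuity argument, or just bound below uniformly; (ii) rescale $\epsilon(\sigma^2,p)=\sigma\,\epsilon(1,p/\sigma)$ in law, reducing to a lower bound on $\Var(\epsilon(1,q))$ for $q\in[0,1/\sigma]\subseteq[0,\infty)$; (iii) for $q\ge q_0$ large the truncation is to $[-q_0,\ \ge q_0]$ which contains a fixed-length window around $0$ where the density is comparable to Gaussian, forcing variance bounded below; for $q\le q_0$ use compactness and continuity of $q\mapsto\Var(\epsilon(1,q))$, which is continuous and strictly positive on the compact set $[0,q_0]$, hence bounded below by some $\rho>0$. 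The hard part is making step (iii) for large $q$ quantitative: I would lower-bound $\Var$ by the variance of the restriction of the density to a fixed sub-interval $[-\tfrac14,\tfrac14]$ (using $\Var(X)\ge \tfrac14\,\mathbb P(|X|\le\tfrac14)\cdot(\text{something})$ via a second-moment-on-a-window estimate, e.g. $\Var(X)\ge \tfrac12 E[(X-X')^2\mathbf 1_{X,X'\in[-1/4,1/4]}]$ for an independent copy $X'$), noting that on $[-\tfrac14,\tfrac14]$ the truncated density is bounded above and below by absolute constants times Lebesgue measure, uniformly in $q\ge 1$ say. This yields the desired absolute $\rho$.
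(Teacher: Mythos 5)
Parts (1), (3), and (4) are sound; in (4) you invoke log-concavity/Brascamp--Lieb for $\Var\le\sigma^2$ where the paper reads it directly off the explicit formula \eqref{longcomplicated}, but both are valid, and your $5\sigma^2$ bound via $\mathbb E(\epsilon^2)=\Var+\mathbb E(\epsilon)^2$ matches the paper.

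The genuine gaps are in (2) and (5). In (2) your main line of attack collapses: you bound the denominator $F(b)-F(-a)$ below by $F(0)-F(-a)=F(a)-\tfrac12$, which vanishes as $a\to 0$, and you correctly observe the resulting ratio blows up for small $a$. The fix you gesture at near the end is right but never carried out. Since $p\le\tfrac12$ we have $b=(1-p)/\sigma\ge 1/(2\sigma)$, hence $F(b)-F(-a)\ge F(1/(2\sigma))-\tfrac12$, which is an absolute positive constant whenever $\sigma$ is bounded above; the point you missed is that the denominator lower bound must come from the \emph{far} endpoint $(1-p)/\sigma$, not the near one. The paper handles $\sigma\ge 19/20$ trivially (the right side of \eqref{explicitmean} then exceeds $1\ge|\epsilon|$), and for $\sigma\le 19/20$ uses $\sqrt{2\pi}\bigl[F(1/(2\sigma))-\tfrac12\bigr]\ge\int_0^{10/19}e^{-x^2/2}\,dx\ge\tfrac12$ together with the numerator bound $f(p/\sigma)-f((1-p)/\sigma)\le f(p/\sigma)$, giving exactly $2\sigma e^{-p^2/2\sigma^2}$. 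In (5) the rescaling-plus-compactness sketch is structurally plausible, but you yourself flag the quantitative window estimate for large $q$ as missing, and the compactness step actually needs to cover the full two-parameter family $(a,b)$ with $a+b=1/\sigma\ge1$, $0\le a\le b$ (in particular $b\to\infty$ for fixed $a$), not just $a$ in a compact interval. The paper takes a different route: it works directly from \eqref{longcomplicated}, reduces via part (1) to the one-parameter asymptotic $\Var/\sigma^2\sim 1-x\,f(x)/F(x)-(f(x)/F(x))^2$ with $x=p/\sigma$ as $\sigma\to0$, and then uses the elementary bounds $f(x)/F(x)\le\sqrt{2/\pi}$ and $F(x)\ge\tfrac12$ to show the subtracted expression is $<1$ uniformly in $x\ge0$.
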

\begin{proof}
Part $(1)$ follows from $Z\stackrel{d}{=}-Z$. Standard computations (see e.g. \cite{burkardt2014truncated}) show 
\begin{align*}
    \mathbb E(\epsilon(\sigma,p))=\sigma\frac{f\Big(\frac{p}{\sigma}\Big)-f\Big(\frac{1-p}{\sigma}\Big)}{F\Big(\frac{1-p}{\sigma}\Big)+F\Big(\frac{p}{\sigma}\Big)-1}, 
\end{align*}
which gives immediately the lower bound in \eqref{explicitmean}. As for the upper bound, if $\sigma\geq \frac{19}{20}, p\in [0,\frac{1}{2}]$ there is nothing to prove since the right side in \eqref{explicitmean} -- which is monotone in $\sigma$ -- is greater than one while $|\epsilon(\sigma^2,p)|\leq 1$. If $\sigma\leq \frac{19}{20}$ we use
\begin{align*}
    E(\epsilon(\sigma,p))\leq \sigma\frac{1}{\sqrt{2\pi}} \frac{e^{-\frac{p^2}{2\sigma^2}}}{F\Big(\frac{1}{2\sigma}\Big)-\frac{1}{2}}\leq 2\sigma e^{-\frac{p^2}{2\sigma^2}},
\end{align*}
where in the last step we bound
\begin{align*}
    \sqrt{2\pi}\Big[F\Big(\frac{1}{2\sigma}\Big)-\frac{1}{2}\Big]\geq \int_{0}^{\frac{10}{19}}e^{-\frac{x^2}{2}}dx\geq\frac{1}{2}.
\end{align*}
This proves part $(2)$. Combining part $(1)$ and part $(2)$ we get part $(3)$. Another standard computation (see again \cite{burkardt2014truncated}) gives
\begin{equation}\label{longcomplicated}
    \frac{\Var(\epsilon(\sigma^2, p))}{\sigma^2}=1-\frac{1}{\sigma}\frac{pf\Big(\frac{p}{\sigma}\Big)+(1-p)f\Big(\frac{1-p}{\sigma}\Big)}{F\Big(\frac{p}{\sigma}\Big)+F\Big(\frac{1-p}{\sigma}\Big)-1}-\Bigg(\frac{f\Big(\frac{p}{\sigma}\Big)-f\Big(\frac{1-p}{\sigma}\Big)}{F\Big(\frac{p}{\sigma}\Big)+F\Big(\frac{1-p}{\sigma}\Big)-1}\Bigg)^2.
\end{equation}
The first bound in $(4)$ follows immediately for any $\sigma>0$, while the second bound is obtained combining $(2)$ with the upper bound on the variance. As for $(5)$, owing to part $(1)$ we can assume $p\in[0,\frac{1}{2}]$ without loss of generality. Since, for fixed $p\neq 0$ and $\sigma\rightarrow 0$, the right side in \eqref{longcomplicated} converges to one, we only need to show that it is bounded away from $0$ as $\sigma\rightarrow 0$ and for $p$, say, in $[0,\frac{1}{10}]$. In this case, \eqref{longcomplicated} leads to the asymptotic relation
\begin{align*}
    \frac{\Var(\epsilon(\sigma^2,p))}{\sigma^2}\sim 1-x\frac{f(x)}{F(x)}-\Big(\frac{f(x)}{F(x)}\Big)^2, \quad x:=\frac{p}{\sigma}.
\end{align*}
Since, for all $x\geq 0$, we have the bounds $\frac{f(x)}{F(x)}\leq \sqrt{\frac{2}{\pi}}$ and $F(x)\geq \frac{1}{2}$, we obtain
\begin{align*}
    x\frac{f(x)}{F(x)}+\Big(\frac{f(x)}{F(x)}\Big)^2\leq \sqrt{\frac{2}{\pi}}\Big(x+\sqrt{\frac{2}{\pi}}\Big)e^{-\frac{x^2}{2}}<1,
\end{align*}
where the last bound follows by computing the explicit maximum.
\end{proof}
The upper bound in \eqref{explicitmean} shows that the drift for the truncated normal noise induced by the boundaries decays rapidly when $p$ is at distance $O\Big(\frac{1}{\sigma}\Big)$ from $0$ or $1$.\\ \\
Combining \eqref{updateonecoord} and \eqref{vardown}, at every step of the Gibbs sampler we get a uniform lower bound on the variance of the truncated normal step given by
\begin{align*}
    \Var(\epsilon(\sigma^2_i,p))\geq \rho \sigma^2_i,
\end{align*}
as long as $A^2\geq\frac{1}{2\beta}$. This is one of the reason for why $A$ has to be large enough in our result.
\subsection{A monotone and contractive coupling} 
Consider two nearby points $p\leq q$ -- say in $[0,\frac{1}{2}]$ -- and perturb them with a truncated normal noise. Because of \eqref{explicitmean}, we may hope to couple them in such a way that they will get even closer, since the one to the left will encounter more drift. Owing to the log-concavity of the normal distribution, we can do better, and maintain their order as well.
\begin{lemma}\label{coupling}
Fix $\delta>0$ and $\sigma\in (0,\infty]$. Let $p,q\in [0,1]$ such that $0\leq p-q\leq \delta$. Then, it is possible to couple two random variables
\begin{align*}
    X_p\stackrel{d}{=}p+\epsilon(\sigma^2,p), \quad X_q\stackrel{d}{=}q+\epsilon(\sigma^2,q)
\end{align*}
in such a way that $0\leq X_p-X_q\leq \delta$.
\end{lemma}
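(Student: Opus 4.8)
The plan is to construct the coupling explicitly via the standard monotone coupling for one-dimensional distributions — the inverse-CDF (quantile) coupling — and then verify that the two required bounds, order preservation and Lipschitz contraction, both follow from properties of the truncated normal. Write $G_p(t) := \mathbb P(p + \epsilon(\sigma^2, p) \le t)$ for the CDF of $X_p$, and similarly $G_q$. Draw $U \sim \mathrm{Unif}[0,1]$ and set $X_p := G_p^{-1}(U)$, $X_q := G_q^{-1}(U)$. By construction the marginals are correct, so everything reduces to showing that, for every $u \in (0,1)$,
\begin{equation}\label{eq:coupgoal}
0 \le G_p^{-1}(u) - G_q^{-1}(u) \le \delta.
\end{equation}
The left inequality in \eqref{eq:coupgoal} is equivalent to the statement that $G_p(t) \le G_q(t)$ for all $t$, i.e. that $X_p$ stochastically dominates $X_q$; the right inequality is equivalent to $G_q(t) \le G_p(t + \delta)$ for all $t$, i.e. that shifting $X_q$ by $\delta$ makes it stochastically dominate $X_p$ (consistent with $q + \delta \ge p$).

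For the order-preservation bound, both $X_p$ and $X_q$ are obtained by conditioning a common-variance Gaussian to lie in $[0,1]$ but centred at $p$ and $q$ respectively, so $X_p$ is (a translate restricted to $[0,1]$ of) the law $f(t-p)\mathbbm 1_{[0,1]}(t)$ and $X_q$ of $f(t-q)\mathbbm 1_{[0,1]}(t)$, where $f$ here is the Gaussian density with variance $\sigma^2$. Since $f(t-p)/f(t-q) = \exp\!\big((q-p)(t - \tfrac{p+q}{2})/\sigma^2\big)$ is, with $p \le q$, a nonincreasing function of $t$, the density of $X_p$ has a monotone (decreasing) likelihood ratio with respect to that of $X_q$ on the common support $[0,1]$; monotone likelihood ratio implies stochastic ordering, hence $G_p \le G_q$. (The limiting case $\sigma = \infty$, where both are uniform on their respective intervals $[-p,1-p]$ shifted back to $[0,1]$, is immediate and can be treated separately or as a limit.) For the contraction bound, I would similarly compare the density of $X_p$ with that of $X_q + \delta$, which is $f(t - (q+\delta))\mathbbm 1_{[\delta, 1+\delta]}(t)$. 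The relevant likelihood ratio on the overlap of supports is again monotone because $q + \delta \ge p$, giving $X_p$ stochastically dominated by $X_q + \delta$; one must also check the easy support/endpoint compatibility so that the quantile comparison is valid across the whole of $(0,1)$.

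The main obstacle is handling the truncation cleanly: the two conditioned Gaussians live on genuinely different intervals ($[-p,1-p]$ versus $[-q,1-q]$ before recentring to $[0,1]$, or equivalently the densities $f(t-p)$ and $f(t-q)$ restricted to the common window $[0,1]$), so one cannot quote a textbook statement about translates of a fixed log-concave density and must instead argue directly with the likelihood ratio on the appropriate common support, checking that outside that overlap the quantile inequalities degenerate in the harmless direction (the dominated variable simply has no mass there). Once the monotone-likelihood-ratio computation is set up, both halves of \eqref{eq:coupgoal} are essentially the same one-line convexity/monotonicity fact applied twice, with $\delta$ playing the role of a shift; there is no heavy calculation, only bookkeeping of supports, and the result holds uniformly in $\delta$ and $\sigma \in (0,\infty]$ as stated.
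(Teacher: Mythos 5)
Your coupling is the same one the paper uses, namely the quantile coupling obtained from a single shared uniform $U$, so the two proofs start at the same place; but the way you verify the two inequalities $0\le G_p^{-1}(u)-G_q^{-1}(u)\le\delta$ is genuinely different. The paper treats $p$ as a continuous parameter, writes $\Gamma(p,u)$ for the quantile function, and proves pointwise that $\partial\Gamma/\partial p\in[0,1]$ by implicit differentiation of $\Phi(p,\Gamma(p,u))=u$; the Lipschitz bound then drops out of the mean value theorem, and the hardest step is establishing concavity in $u$ of a certain auxiliary function $G(p,u)$. You instead compare the two endpoint laws directly: $G_p\le G_q$ is stochastic domination, which you get from the monotone likelihood ratio of two Gaussian densities restricted to the common window $[0,1]$, and $G_q(t)\le G_p(t+\delta)$ is stochastic domination of $X_p$ by $X_q+\delta$, obtained by the same MLR argument applied to the shifted density. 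Both routes work; the paper's derivative bound is a single unified estimate that handles the whole family at once, while your MLR argument is more elementary and makes transparent why log-concavity is the only input (matching the Remark following the lemma).

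Two caveats on your write-up. First, your displayed likelihood ratio has the sign flipped: $f(t-p)/f(t-q)=\exp\bigl((p-q)(t-\tfrac{p+q}{2})/\sigma^2\bigr)$, not $(q-p)$, and the lemma's hypothesis is $p\ge q$ (you wrote $p\le q$); with those corrections the ratio is nondecreasing, not nonincreasing, which is what you need to conclude $X_p\succeq_{\mathrm{st}} X_q$ and hence $G_p\le G_q$. Your final conclusion is right but the intermediate signs as written are all wrong and would confuse a reader trying to check it. Second, the support mismatch in the second comparison deserves a sentence rather than a gesture: the ratio of the density of $X_q+\delta$ to that of $X_p$ is $0$ on $[0,\delta)$, finite and increasing on $[\delta,1]$, and $+\infty$ on $(1,1+\delta]$, which is still globally nondecreasing with the usual conventions, so MLR does imply stochastic domination; that observation is what makes the ``bookkeeping of supports'' harmless, and it should be stated explicitly since otherwise a reader may worry that MLR on only the overlap is insufficient.
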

\begin{proof}
If $\sigma=\infty$, then $q+\epsilon(\sigma^2,q)\stackrel{d}{=}p+\epsilon(\sigma^2,p)$ as they are both uniformly distributed on the unit interval. Otherwise, let $\Gamma(p,\cdot)$ the inverse distribution function of the random variable $p+\epsilon(\sigma^2,p)$. For $U$ uniformly distributed on $[0,1]$, define $X_p:=\Gamma(p,U)$. Using the \emph{same} noise $U$ for constructing $X_q$, we have 
\begin{align*}
    X_p\stackrel{d}{=}p+\epsilon(\sigma^2,p), \quad X_q\stackrel{d}{=}q+\epsilon(\sigma^2,q), \quad X_p-X_q=\Gamma(p,U)-\Gamma(q,U).
\end{align*}
In particular, as a result of the mean value theorem, it suffices to show 
\begin{equation}\label{toprove}
    \frac{\partial \Gamma}{\partial p}(p,u)\in [0,1], \quad p,u\in [0,1].
\end{equation}
Let $\Phi(p,\cdot)$ the distribution function of $X_p$, so that 
\begin{align*}
    \Phi(p,u)=\frac{\int_0^{u}f\Big(\frac{x-p}{\sigma}\Big)dx}{F\Big(\frac{1-p}{\sigma}\Big)+F\Big(\frac{p}{\sigma}\Big)-1}
\end{align*}
Consider the identity
\begin{align*}
    \Phi(p,\Gamma(p,u))=u,\quad p,u\in [0,1].
\end{align*}
Differentiating with respect to $p$ and rearranging, we obtain
\begin{align*}
\frac{\partial \Gamma}{\partial p}(p,u)=1-\frac{(1-u)f\Big(\frac{p}{\sigma}\Big)+uf\Big(\frac{1-p}{\sigma}\Big)}{f\Big(\frac{p-\Gamma(p,u)}{\sigma}\Big)}.
\end{align*}
The upper bound in \eqref{toprove} follows immediately. As for the lower bound, we need to show 
\begin{align*}
    G(p,u):=f\Big(\frac{p-\Gamma(p,u)}{\sigma}\Big)-(1-u)f\Big(\frac{p}{\sigma}\Big)-uf\Big(\frac{1-p}{\sigma}\Big)\geq 0, \quad p,u\in [0,1].
\end{align*}
Since $G(p,0)=G(p,1)=0$ for all $p\in [0,1]$, it is enough to show that $\frac{\partial G}{\partial u}(p,u)$ is decreasing in $u$ for all $p,u\in [0,1]$. The claim then follows as $G(p,u)$ is concave in $u$ for each $p$.
Using the definition of $\Gamma(p,u)$ and inverse differentiation rule we get
\begin{align*}
    \frac{\partial G}{\partial u}(p,u)&=\frac{p-\Gamma(p,u)}{\sigma}f\Big(\frac{p-\Gamma(p,u)}{\sigma}\Big)\frac{\partial \Gamma(p,u)}{\partial u}+f\Big(\frac{p}{\sigma}\Big)-f\Big(\frac{1-p}{\sigma}\Big)\\&=\frac{p-\Gamma(p,u)}{\sigma}\Big(F\Big(\frac{1-p}{\sigma}\Big)+F\Big(\frac{p}{\sigma}\Big)-1\Big)+f\Big(\frac{p}{\sigma}\Big)-f\Big(\frac{1-p}{\sigma}\Big).
\end{align*}
Since the only dependence on $u$ is through $\Gamma(p,u)$, which is increasing in $u$, we conclude.
\end{proof}
\begin{remark}
Fix $\Phi$ log-concave on $\mathbb R$. Lemma \ref{coupling} remains valid if $\epsilon(\sigma^2,p)$ is replaced by $\tilde{\epsilon}(p)$ whose density on $[0,1]$ is proportional to $\Phi(\cdot-p)$. 
\end{remark}
We can equip $\mathbb R^d$ with the coordinate-wise partial order, so that $\bold p\leq \bold q$ if and only if $p_i\leq q_i$ for all $i\in [d]$. Owing to Lemma \ref{coupling}, we can couple two walkers starting from different points in a monotone and contractive fashion (with respect to the $\|\cdot \|_{\infty}$ norm). By using the same uniform random variable $U$ given in the proof of Lemma \ref{coupling} to update multiple walkers, we can actually do better and construct a monotone and contractive grand-coupling for an arbitrary number of initial conditions.

\begin{lemma}\label{Coup}
Let $C$ be a network on $d$ vertices, and let $A>0$ and $S$ be any index set. Fix $\delta>0$, and consider a sequence $\{\bold p(s)\}_{s\in S}\subset [0,1]^d$.
Then, we can construct random variables $\{\bold p'(s)\}_{s\in S}\subset [0,1]^d$ on a common probability space, with $\bold p'(s)\sim K_{A,C}(\bold p(s))$ for $s\in S$, so that whenever for some $s,r\in S$ we have 
\begin{equation}\label{grandcoup}
    \bold 0\leq \bold p(s)-\bold p(r)\leq \delta \bold 1,
\end{equation}
then we also have
\begin{align*}
    \bold 0\leq \bold p'(s)-\bold p'(r)\leq \delta \bold 1.
\end{align*}
\end{lemma}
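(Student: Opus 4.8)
The plan is to construct the grand-coupling one Gibbs step at a time, reusing a single source of randomness across all indices $s\in S$, and then to invoke Lemma \ref{coupling} coordinate-by-coordinate. First I would sample the update coordinate $I\in[d]$ \emph{once}, uniformly at random, and use the \emph{same} $I$ to update every walker $\bold p(s)$. This is the key point that makes the coupling ``grand'': since all walkers touch the same coordinate, the comparison between $\bold p'(s)$ and $\bold p'(r)$ reduces to comparing the single updated coordinate, because all other coordinates are left unchanged and hence still satisfy \eqref{grandcoup} by hypothesis.

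Next, conditioned on $I=i$, recall from \eqref{updateonecoord} that the new $i$-th coordinate of $\bold p(s)$ is distributed as $\hat p_i(s)+\epsilon_i(\sigma_i^2,\hat p_i(s))$, where $\hat p_i(s)=(D^{-1}C\bold p(s))_i$ is a convex combination (with weights $c_{ij}/c_i$, summing to $1$) of the \emph{other} coordinates of $\bold p(s)$. The crucial observation is that the map $\bold p\mapsto \hat p_i$ is monotone and $1$-Lipschitz in the $\|\cdot\|_\infty$ norm on the complementary coordinates: if $\bold 0\le \bold p(s)-\bold p(r)\le \delta\bold 1$, then, since $\hat p_i$ only depends on coordinates $j\ne i$ and is an average of them with nonnegative weights summing to one, we get $0\le \hat p_i(s)-\hat p_i(r)\le \delta$. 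So the two centers of the truncated normals are themselves ordered and within $\delta$.

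Now I would apply Lemma \ref{coupling} with $p:=\hat p_i(s)$, $q:=\hat p_i(r)$ (after checking the ordering convention matches; the lemma is symmetric in the roles up to relabeling, and part $(1)$ of Lemma \ref{truncatedstuff} together with the construction handles any sign issue), and $\sigma=\sigma_i$: using the single uniform random variable $U$ from that proof, we obtain $X_{\hat p_i(s)}$ and $X_{\hat p_i(r)}$ with the correct marginals and with $0\le X_{\hat p_i(s)}-X_{\hat p_i(r)}\le \delta$. Setting $\bold p'(s)$ to equal $\bold p(s)$ in all coordinates $j\ne i$ and $X_{\hat p_i(s)}$ in coordinate $i$ gives $\bold p'(s)\sim K_{A,C}(\bold p(s))$, and the comparison $\bold 0\le \bold p'(s)-\bold p'(r)\le \delta\bold 1$ holds: coordinates $j\ne i$ are unchanged and still satisfy \eqref{grandcoup}, while coordinate $i$ is controlled by Lemma \ref{coupling}. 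The fact that the \emph{same} $U$ (and the same $I$) is used for every $s\in S$ simultaneously is exactly what makes this a genuine grand-coupling rather than a pairwise one — the construction is pathwise, so the conclusion holds for \emph{every} pair $(s,r)$ satisfying the hypothesis, not just one chosen in advance.

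The main obstacle, such as it is, is bookkeeping: one must be careful that Lemma \ref{coupling} as stated takes a hypothesis of the form $0\le p-q\le\delta$ with $X_p-X_q$ ordered the same way, so I would either state the coordinate comparison with the indices in the order that matches the lemma, or note that applying the lemma to $(q,p)$ and relabeling gives the version needed. There is also the degenerate case $c_i=\infty$ (the convention $\sigma_i=\infty$, uniform noise) which is covered by the $\sigma=\infty$ branch of Lemma \ref{coupling}; and the case where $\hat p_i(s)=\hat p_i(r)$, which is trivial. None of these is a real difficulty; the content is entirely in the simple but decisive choice of shared randomness across $S$ and in the $1$-Lipschitz monotonicity of the conditional-expectation map $\bold p\mapsto\hat p_i$.
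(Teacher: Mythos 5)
Your proof is correct and follows essentially the same route as the paper's: choose the same random coordinate $i$ for every walker, observe that convexity of $\bold p\mapsto \hat p_i$ propagates the ordered-and-within-$\delta$ property to the centers $\hat p_i(s),\hat p_i(r)$, and then invoke Lemma \ref{coupling} with a single shared uniform variable $U$. Your write-up just spells out the bookkeeping (unchanged off-$i$ coordinates, the $\sigma=\infty$ degenerate case) that the paper leaves implicit.
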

\begin{proof}
Choose the same uniformly random coordinate $i\in [d]$ for updating all the $\bold p(s)$'s. Because of \eqref{grandcoup} and convexity, for a pair $s,r\in S$ of interest above we obtain
\begin{align*}
    0\leq \hat p_i(s)-\hat p_i(r)\leq \delta.
\end{align*}
Finally, we conclude owing to Lemma \ref{coupling}, since we can use the same noise to generate all the truncated normal updates at once in a monotone and contractive fashion.
\end{proof}
\subsection{Clustering on the main diagonal}
While we will show that the mixing time is governed by the diffusion on the diagonal, it is still important to keep control of how spread out the coordinates are. There are two forces in competition with each other: while the averaging process tends to attract coordinates together, the normal noise will play the opposite role. A measure of the clustering of the coordinates is given by $\langle \bold p,\Delta\bold p\rangle$ since it vanishes only when $\bold p\in \mathcal D$ under Assumption \ref{assumption}. The following lemma describes its evolution when $\bold p$ is updated according to the Gibbs sampler. 
\begin{lemma}\label{lemmaexcursion}
Let $C$ be a network on $d$ vertices satisfying Assumption \ref{assumption}, and let $A>0$. For any $k\in \mathbb N$ and $\bold p_0\in [0,1]^d$, let $\bold p(k)\sim K^{*k}_{A,C}(\bold p_0)$. Then
\begin{align*}
    \mathbb E\Big(\langle \bold p(k), \Delta\bold p(k)\rangle \Big)\leq \Big(1-\frac{\lambda}{d}\Big)^k\mathbb E\Big(\langle \bold p_0,\Delta\bold p_0\rangle\Big)+\frac{5d}{2\lambda A^2}.
\end{align*}
In particular, if $\bold p_0\in \mathcal D$, then
\begin{equation}\label{excursionbound}
    \mathbb E\Big(\langle \bold p(k),\Delta\bold p(k)\rangle\Big)\leq \frac{5d}{2\lambda A^2}
\end{equation}
\end{lemma}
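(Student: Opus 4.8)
The plan is to perform a one-step analysis of the quadratic form $\langle\bold p,\Delta\bold p\rangle$ and then iterate. Let $\bold p'\sim K_{A,C}(\bold p)$ be a single update as in \eqref{totalstep}. Conditionally on the chosen index $i\in[d]$, the increment is $\bold u=(\widehat p_i-p_i+\epsilon_i)\,e_i$, where $e_i$ is the $i$-th standard basis vector and $\epsilon_i\stackrel{d}{=}\epsilon(\sigma_i^2,\widehat p_i)$ with $\sigma_i^2=1/(2A^2c_i)$. Expanding the quadratic form and using that $\Delta$ is self-adjoint for $\langle\cdot,\cdot\rangle$ (see \eqref{inner}),
\begin{align*}
\langle\bold p',\Delta\bold p'\rangle=\langle\bold p,\Delta\bold p\rangle+2\langle\bold u,\Delta\bold p\rangle+\langle\bold u,\Delta\bold u\rangle.
\end{align*}
Since $D$ is diagonal with entries $c_i$, one has $\langle e_i,\Delta\bold p\rangle=c_i(\Delta\bold p)_i$ and $\langle e_i,\Delta e_i\rangle=c_i(\Delta e_i)_i=c_i$, the last identity using that the network is loopless, so $(D^{-1}C)_{ii}=0$. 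Writing $a_i:=-(\Delta\bold p)_i=\widehat p_i-p_i$ and $t:=a_i+\epsilon_i$, the two last terms collapse to $-2c_ia_it+c_it^2=c_i\big((t-a_i)^2-a_i^2\big)=c_i(\epsilon_i^2-a_i^2)$, hence
\begin{align*}
\langle\bold p',\Delta\bold p'\rangle=\langle\bold p,\Delta\bold p\rangle+c_i\epsilon_i^2-c_i(\Delta\bold p)_i^2 .
\end{align*}
The point to highlight is that the mixed terms in $\epsilon_i$ cancel \emph{exactly}, so the (small) drift of the truncated normal never enters this estimate.

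Next I would take expectations over the randomness of the step. Conditioning on $i$ and invoking the uniform second-moment bound $\mathbb E(\epsilon_i^2)\le 5\sigma_i^2=5/(2A^2c_i)$ from \eqref{varup} gives $c_i\,\mathbb E(\epsilon_i^2)\le 5/(2A^2)$; averaging over $i$ uniform in $[d]$ and using $\sum_i c_i(\Delta\bold p)_i^2=\langle\Delta\bold p,\Delta\bold p\rangle$ yields
\begin{align*}
\mathbb E\big(\langle\bold p',\Delta\bold p'\rangle\,\big|\,\bold p\big)\le\langle\bold p,\Delta\bold p\rangle-\tfrac1d\langle\Delta\bold p,\Delta\bold p\rangle+\tfrac{5}{2A^2}.
\end{align*}
By \eqref{lambdastuff}, $\langle\Delta\bold p,\Delta\bold p\rangle\ge\lambda\langle\bold p,\Delta\bold p\rangle$, so with $X_k:=\mathbb E\big(\langle\bold p(k),\Delta\bold p(k)\rangle\big)$ the tower property gives the one-step recursion $X_{k+1}\le(1-\lambda/d)X_k+5/(2A^2)$. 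Note $0<\lambda=\lambda_2\le\lambda_d\le 2\le d$, so $1-\lambda/d\in[0,1)$.

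Finally I would solve this recursion: unrolling and summing the geometric series $\sum_{j\ge0}(1-\lambda/d)^j=d/\lambda$ gives $X_k\le(1-\lambda/d)^kX_0+\frac{5d}{2\lambda A^2}$, the claimed bound. If $\bold p_0\in\mathcal D$ then $\Delta\bold p_0=0$ under Assumption \ref{assumption}, so $X_0=0$ and \eqref{excursionbound} follows. I do not expect a genuine obstacle here: once the exact cancellation of the $\epsilon_i$ cross-terms is spotted the computation is essentially forced; the only places needing a little care are the loopless identity $\langle e_i,\Delta e_i\rangle=c_i$ and the fact that \eqref{varup} is uniform in the truncation parameter (which is why $A$ enters only through $\sigma_i^2$ and no lower bound on $A$ is required for this lemma).
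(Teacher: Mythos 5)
Your proof is correct and follows essentially the same route as the paper: a one-step expansion of the quadratic form showing $\langle\bold p',\Delta\bold p'\rangle-\langle\bold p,\Delta\bold p\rangle=c_i\epsilon_i^2-c_i(\Delta\bold p)_i^2$ (the paper writes this as $\langle\Pi_i(-\Delta\bold q+\boldsymbol\epsilon),\Delta\bold q+\boldsymbol\epsilon\rangle$, where the cross-terms also cancel), followed by the second-moment bound \eqref{varup}, the spectral inequality \eqref{lambdastuff}, and geometric summation. The only cosmetic difference is that you make the cancellation of the $\epsilon_i$ cross-terms and the loopless identity $\langle e_i,\Delta e_i\rangle=c_i$ explicit, which the paper leaves implicit.
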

\begin{proof}
Write $\bold q':=\bold p(k)$ and $\bold q:=\bold p(k-1)$. According to \eqref{totalstep}, conditional on index $i$ being chosen we can write
\begin{align*}
    \langle \bold q',\Delta\bold q'\rangle-\langle \bold q,\Delta\bold q\rangle &=\left\langle \Pi_i\Big(-\Delta \bold q+\boldsymbol{\epsilon}(\Sigma, \hat{\bold q})\Big), \Delta \bold q+\boldsymbol{\epsilon}(\Sigma,\hat{\bold q})\right\rangle.
\end{align*}
Taking expected values and using $\sum\limits_{i\in [d]}\Pi_i=I$,  
\begin{align*}
    \mathbb E\Big(\langle \bold q', \Delta\bold q'\rangle \Big)-\mathbb E\Big(\langle \bold q, \Delta\bold q\rangle \Big)=\frac{1}{d}\mathbb E\Big[-\langle\Delta \bold q,\Delta \bold q\rangle+ \langle \boldsymbol{\epsilon}(\Sigma,\hat{\bold q}), \boldsymbol{\epsilon}(\Sigma,\hat{\bold q})\rangle\Big].
\end{align*}

Using \eqref{lambdastuff}, \eqref{updateonecoord}, and \eqref{varup}, we obtain
\begin{align*}
    \mathbb E\Big(\langle \bold q', \Delta\bold q'\rangle \Big)\leq \Big(1-\frac{\lambda}{d}\Big)\mathbb E\Big(\langle \bold q, \Delta\bold q\rangle \Big)+\frac{1}{d}\sum_{i\in [d]} \frac{5c_i}{2c_iA^2}=\Big(1-\frac{\lambda}{d}\Big)\mathbb E\Big(\langle \bold q, \Delta\bold q\rangle \Big)+\frac{5}{2A^2}.
\end{align*}
The result then follows by iterating in $k$ the inequality above and bounding
\begin{align*}
    \sum_{j=0}^{k}\Big(1-\frac{\lambda}{d}\Big)^j\leq \frac{d}{\lambda}.
\end{align*}
\end{proof}
In order to prove the upper bound in Theorem \ref{MAINTHEOREM}, we will need a more refined control on the probability of excursions of the Gibbs sampler away from the main diagonal. 
\begin{lemma}\label{largedeviations}
Let $C$ be a network on $d$ vertices satisfying Assumption \ref{assumption}, and $A^2\geq \frac{1}{2\beta}$. For any $\delta>0$, let $\eta>0$ be such that 
\begin{equation}\label{conditionforeta}
    \frac{\lambda^2\beta^2\delta^2}{2d}-2dA^2\eta^2\geq \eta.
\end{equation}
Let $\bold p_0\in \mathcal D$ and, for $k\in\mathbb N$, let $\bold p(k)\sim K_{A,C}^{*k}(\bold p_0)$. Then we have the bound
\begin{align*}
\mathbb P\Big(\max_{i,j\in [d], t\in [k]}|p_i(t)-\hat p_j(t)|)\geq \delta\Big)\leq 13ke^{-\eta A^2}.
\end{align*}
In particular, if $A\geq A^*(\delta,\beta,\lambda)$, 
\begin{equation}\label{sharpoffidiagonal}
\mathbb P\Big(\max_{i,j\in [d], t\in [k]}|p_i(t)-\hat p_j(t)|)\geq \delta\Big)\leq 13ke^{-\frac{\lambda\beta\delta}{2d}A}.
\end{equation}
\end{lemma}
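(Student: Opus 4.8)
The plan is to reduce the stated event to a hitting-time estimate for the one-dimensional quantity $X_t:=\langle \bold p(t),\Delta\bold p(t)\rangle$, which measures how far the walk sits from the diagonal, and then to run an exponential supermartingale argument for $X_t$ fuelled by the contraction already isolated in the proof of Lemma~\ref{lemmaexcursion}. First I would record the purely linear-algebraic reduction: writing $\bold p=\bar p\,\bold 1+\bold p^{\perp}$ with $\bold p^{\perp}\in\mathcal D^{\perp}$, one has $\widehat{\bold p}=\bar p\,\bold 1+D^{-1}C\bold p^{\perp}$, and since each coordinate of $D^{-1}C\bold p^{\perp}$ is a convex combination of the coordinates of $\bold p^{\perp}$ we get $|p_i-\hat p_j|\le 2\|\bold p^{\perp}\|_{\infty}$ for all $i,j$; combining $\langle\bold p^{\perp},\bold p^{\perp}\rangle\ge\beta\|\bold p^{\perp}\|_{\infty}^{2}$ with the first inequality in \eqref{spectralbounds} gives $X_t=\langle\bold p(t),\Delta\bold p(t)\rangle\ge\lambda\beta\|\bold p^{\perp}(t)\|_{\infty}^{2}$. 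Hence $\max_{i,j}|p_i(t)-\hat p_j(t)|\ge\delta$ forces $X_t\ge\delta':=\tfrac{\lambda\beta\delta^{2}}{4}$, so it is enough to bound $\mathbb P\big(\exists\,t\le k:\ X_t\ge\delta'\big)$, and we note $X_0=0$ since $\bold p_0\in\mathcal D$.

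The technical core is a one-step moment generating estimate. Conditioning on the index $i$ selected at step $t$, the same computation as in the proof of Lemma~\ref{lemmaexcursion} gives $X_t-X_{t-1}=c_i\big(\epsilon_i^{2}-(\Delta\bold p(t-1))_i^{2}\big)$, so for $0<\theta<A^{2}$
\[
\mathbb E\big[e^{\theta(X_t-X_{t-1})}\mid\mathcal F_{t-1}\big]\ \le\ \Big(1-\tfrac{\theta}{A^{2}}\Big)^{-1/2}\cdot\frac1d\sum_{i\in[d]}e^{-\theta c_i(\Delta\bold p(t-1))_i^{2}}.
\]
For the first factor one uses $c_i\sigma_i^{2}=\tfrac1{2A^{2}}$ together with a short computation with the error function showing that truncating a centred Gaussian to an interval containing $0$ can only decrease its $\chi^{2}$ moment generating function, so that $\mathbb E[e^{\theta c_i\epsilon_i^{2}}\mid\mathcal F_{t-1}]\le(1-\theta/A^{2})^{-1/2}$ uniformly in the location of $\hat p_i$ in $[0,1]$. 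For the second factor, the elementary inequality $1-e^{-\sum_i y_i}\le\sum_i(1-e^{-y_i})$ for $y_i\ge0$ gives $\sum_i(1-e^{-\theta c_i a_i^{2}})\ge 1-e^{-\theta\langle\Delta\bold p,\Delta\bold p\rangle}\ge 1-e^{-\theta\lambda X_{t-1}}$ by \eqref{lambdastuff}, so the second factor is at most $1-\tfrac1d\big(1-e^{-\theta\lambda X_{t-1}}\big)$. Combining, and using $(1-u)^{1/2}\ge 1-u$, one sees that $e^{\theta X_t}$ is a supermartingale at step $t$ as soon as $X_{t-1}\ge x_0:=\tfrac{2d}{\lambda A^{2}}$, while for $X_{t-1}<x_0$ the conditional moment generating function is at most $(1-\theta/A^{2})^{-1/2}$.

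To turn this into the claimed bound, set $\tau:=\inf\{t:X_t\ge\delta'\}$. On $\{\tau\le k\}$, since $X_0=0<x_0$, there is a last time $s<\tau$ with $X_s<x_0$, and between $s$ and $\tau$ the chain stays in $[x_0,\delta')$. Letting $\tilde\tau_s$ be the first exit time of $[x_0,\delta')$ after $s$, the process $e^{\theta X_{(s+n)\wedge\tilde\tau_s}}$ has, on $\{X_s<x_0\}$, conditional expectation at most $(1-\theta/A^{2})^{-1/2}e^{\theta x_0}$ after its first step and is a supermartingale thereafter, so optional stopping at a bounded time gives $\mathbb P\big(X_s<x_0,\ \tilde\tau_s\le k,\ X_{\tilde\tau_s}\ge\delta'\big)\le(1-\theta/A^{2})^{-1/2}e^{\theta(x_0-\delta')}$; summing over the at most $k$ admissible values of $s$ yields $\mathbb P(\tau\le k)\le k\,(1-\theta/A^{2})^{-1/2}\,e^{\theta x_0}\,e^{-\theta\delta'}$. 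Now choose $\theta:=\eta A^{2}/\delta'$, so that $e^{-\theta\delta'}=e^{-\eta A^{2}}$; after substituting $\delta'=\lambda\beta\delta^{2}/4$ the hypothesis $\tfrac{\lambda^{2}\beta^{2}\delta^{2}}{2d}-2dA^{2}\eta^{2}\ge\eta$ becomes exactly $2d\delta'\theta^{2}+\theta\le\tfrac{2\lambda\beta A^{2}}{d}$, which both keeps $\theta$ well inside $(0,A^{2})$ and bounds $\theta x_0=\tfrac{2d\theta}{\lambda A^{2}}$ by a universal constant (at most $4\beta\le2$, using $\beta\le1/d$). Hence $(1-\theta/A^{2})^{-1/2}e^{\theta x_0}\le\sqrt2\,e^{2}\le 13$, giving the first bound; and for \eqref{sharpoffidiagonal} one takes $\eta$ as large as the constraint permits, of order $\tfrac{\lambda\beta\delta}{2dA}$, for which $\eta A^{2}\ge\tfrac{\lambda\beta\delta}{2d}A$ once $A\ge A^{*}(\delta,\beta,\lambda)$.

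The step I expect to be the real obstacle is the one-step estimate, and within it the uniform control of $\mathbb E[e^{\theta c_i\epsilon_i^{2}}\mid\mathcal F_{t-1}]$ over all positions of $\hat p_i$, including the corners where the truncated normal degenerates towards a half-normal: it is precisely the absence of any extra multiplicative constant there that lets the excursion sum stay linear in $k$ with a universal constant rather than blowing up exponentially (or quadratically) in $k$. Once that is in place, identifying the supermartingale threshold $x_0$, the excursion bookkeeping, and matching constants to the exact form of the hypothesis on $\eta$ (the bound being trivially true in the vacuous regime $13ke^{-\eta A^{2}}\ge1$) are routine.
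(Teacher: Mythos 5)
Your approach is genuinely different from the paper's. After reducing the event to a threshold crossing of $X_t:=\langle\bold p(t),\Delta\bold p(t)\rangle$ (a reduction the paper also performs, with the slightly different threshold $\lambda\beta^2\delta^2/2$), the paper proceeds by a purely combinatorial union bound over three bad events: a single step of $X_t$ incrementing by more than $\eta$, a window of length $m=dA^2\eta$ never selecting the coordinate maximizing $c_i|\Delta p_i|^2$, and the residual event, where one argues that shortly before the target the maximizing coordinate was picked, so a fresh Gaussian must again be abnormally large. Each piece is handled with a Gaussian tail or a geometric estimate, giving $6+1+6=13$. You instead build an exponential supermartingale for $e^{\theta X_t}$ on the band $[x_0,\delta')$, using two correct ingredients: the uniform domination of the truncated-Gaussian $\chi^2$ moment generating function by $(1-\theta/A^2)^{-1/2}$ (a nice scaling argument that does hold after shrinking the truncation interval), and the subadditivity $1-e^{-\sum y_i}\le\sum(1-e^{-y_i})$ combined with \eqref{lambdastuff}. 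The excursion bookkeeping via the last visit below $x_0$ and optional stopping is also a standard and valid device, and your choice $\theta=\eta A^2/\delta'$ is tuned so that \eqref{conditionforeta} recombines nicely. The trade-off is that your route is more modular while the paper's is more elementary; conceptually both capture the same phenomenon, that $X_t$ experiences negative drift proportional to its value plus bounded Gaussian noise.

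There is, however, one step you assert too casually. The claim that \emph{``$e^{\theta X_t}$ is a supermartingale at step $t$ as soon as $X_{t-1}\ge x_0$''} is not unconditional: expanding your estimate, the supermartingale inequality $1-\tfrac1d(1-e^{-\theta\lambda X_{t-1}})\le(1-\theta/A^2)^{1/2}$ requires, even in the most favourable limit $X_{t-1}\to\infty$, that $\theta/A^2\le 1-(1-1/d)^2=(2d-1)/d^2$. Your hypothesis gives only $\theta\le 2\lambda\beta A^2/d$, i.e. $\theta/A^2\le 2\lambda\beta$, which for small $d$ (say $d=2$, where $\lambda\beta$ can be as large as $1$) is not below the needed threshold. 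Likewise, the uniform bound $(1-\theta/A^2)^{-1/2}\le\sqrt2$ you invoke needs $\theta/A^2\le 1/2$. You wave at the right fix — that when the bound is non-vacuous, i.e. $\eta A^2\ge\ln 13$, one can feed $\theta\delta'\ge\ln 13$ back into \eqref{conditionforeta} and obtain $\theta/A^2\le\frac{2\lambda\beta}{d(1+2d\ln13)}$, which is comfortably below both thresholds — but this chain of implications is exactly the non-routine bookkeeping that would need to be written out. Without it, the supermartingale claim and the constant $13$ are not established under the hypotheses as stated ($A^2\ge\tfrac1{2\beta}$ alone does not bound $\theta/A^2$). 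So the approach is sound, but the constant-matching step you defer is where the remaining work actually lives.
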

\begin{proof}
By convexity, 
\begin{align*}
\max\limits_{i,j\in [d]}|p_i-\hat p_j|\leq \max\limits_{i,j\in [d]}|p_i-p_j|.
\end{align*}Combining with \eqref{inner} and \eqref{spectralbounds}, if $\max\limits_{i,j\in [d]}|p_i(t)-\hat p_j(t)|\geq \delta$ occurs at some $t$ then we have the chain of inequalities
\begin{align*}
    \beta^2\delta^2\leq c_ic_j|p_i(t)-p_j(t)|^2\leq 2\langle \bold p(t)-\overline p(t)\bold 1,\bold p(t)-\overline p(t)\bold 1\rangle\leq \frac{2}{\lambda}\langle \bold p(t),\Delta \bold p(t)\rangle, 
\end{align*}
which leads to the inclusion of events
\begin{align*}
   \Big\{\max_{i,j\in [d], t\in [k]}|p_i(t)-\hat p_j(t)|)\geq \delta \Big\}\subset \Big\{\max_{t\in [k]}\langle \Delta \bold p(t), \bold p(t)\rangle \geq \frac{\lambda\beta^2 \delta^2}{2}\Big\}=:B_k
\end{align*}
In what follows, $I=I_t$ denotes the coordinate selected by the Gibbs sampler at step $t$. For $\eta>0$ as in \eqref{conditionforeta} and $m:=dA^2\eta$, define the events
\begin{align*}
    &B_k(\eta):=\Big\{\langle \Delta \bold p(t),\bold p(t)\rangle-\langle \Delta \bold p(t-1),\bold p(t-1)\rangle\geq \eta \text{ for some }t\in [k] \Big\},\\ 
    &B_k(m):=\Big\{\exists T= \{h,\ldots, h+m-1\}\subset [k], \Big |I_{t-1}\neq \argmax_{i\in [d]} \{c_i|\Delta p_i(t-1)|^2\}, t\in T\Big\},\\
    &B_k(m,\eta):=B_k\setminus \Big(B_k(\eta)\cup B_k(m)\Big).
\end{align*}
We use the notation $\bold q=\bold p(t-1)$, $\bold q'=\bold p(t)$. As we have shown in Lemma \ref{lemmaexcursion},
\begin{equation}\label{singleclusteringupdate}
    \langle \Delta \bold q',\bold q'\rangle-\langle \Delta \bold q,\bold q\rangle=-c_{I}|(\Delta q)_{I}|^2+c_{I}|\epsilon_{I}(\sigma^2_{I}, \hat q_{I})|^2.
\end{equation}
Therefore, if $Z$ denotes a standard normal random variable, a union bounds yields
\begin{align*}
    \mathbb P(B_k(\eta))\leq k\frac{\mathbb P(Z^2\geq 2A^2\eta)}{\mathbb P(Z\in [-2c_IA^2\hat q_I,2c_IA^2(1-\hat q_I)])}\leq 6ke^{- \eta A^2},
\end{align*}
where the last inequality follows from $A^2c_I\geq A^2\beta\geq 1$ and the log-concavity of the normal distribution, which together gives $\mathbb P(Z\in [-\hat q_I,1-\hat q_I])\geq \mathbb P(Z\in [0,1])\geq \frac{1}{3}$.\\ \\
As for $B_k(m)$, a union bound, $\ln(1+x)\leq x$ $(x\in\mathbb R)$ and the definition of $m$ gives
\begin{align*}
    \mathbb P(B_k(m))\leq k\Big(1-\frac{1}{d}\Big)^{m}\leq ke^{-\frac{m}{d}}\leq ke^{-\eta A^2}
\end{align*}
In the case of $B_k(m,\eta)$, consider the time $t^*$ at which $\langle \Delta\bold p(t^*),\bold p(t^*)\rangle$ reaches the target $\lambda\beta^2\delta^2/2$. Going backwards in time for $m$ steps, we cannot be further than $m\eta$ from the target. Also, the coordinate maximizing $c_i|\Delta p_i(t)|^2$ was selected at least once in the time interval $[t^*-m,t^*-1]$. More precisely, the occurrence of $B_k(m,\eta)$ implies that
there is $t\in[t^*-m,t^*-1]$ where, using again the notation $\bold q=\bold p(t-1), \bold q'=\bold p(t)$, 
\begin{align*}
\langle \Delta \bold q,\bold q \rangle \in \Big(\frac{\lambda\beta^2 \delta^2}{2}&-m\eta, \frac{\lambda\beta^2 \delta^2}{2}\Big), \quad I=\argmax_{i\in [d]}c_i|\Delta q_i|^2, \quad \langle \Delta \bold q',\bold q'\rangle \geq \frac{\lambda\beta^2 \delta^2}{2}-m\eta.
\end{align*}
Combining with \eqref{singleclusteringupdate} and Lemma \ref{cutefact}, the event $B_k(m,\eta)$ implies 
\begin{align*}
    c_{I}|\epsilon_{I}(\sigma^2_{I}, \hat q_{I})|^2&\geq -m\eta+c_{I}|\Delta q_{I}|^2\\&\geq -m\eta+\frac{1}{d}\langle \Delta\bold q,\Delta\bold q\rangle
    \\&\geq-m\eta +\frac{\lambda}{d}\langle\Delta\bold q,\bold q\rangle \\&\geq \frac{\lambda^2\beta^2\delta^2}{2d}-\Big(\frac{\lambda}{d}+1\Big)dA^2\eta^2\\& \geq \eta
\end{align*}
where we also used $\frac{\lambda}{d}\leq 1$, the definition of $m$ and \eqref{conditionforeta}. 
Therefore, using a union bound and $A^2\beta\geq 1$, 
\begin{align*}
    \mathbb P(B_k(m,\eta))\leq 6ke^{-\eta A^2}.
\end{align*}
Collecting all the terms and using a union bound over all coordinates, we obtain the first claim. As for the second, notice that for $A\geq A^*(\delta,\beta,\lambda)$, the choice of
\begin{align*}
    \eta=\frac{\lambda\beta\delta}{2 dA}.
\end{align*}
satisfies \eqref{conditionforeta}. Here we used that, since $\beta\leq\frac{1}{d}$ for any network $C$, we can define $A^*(\delta,\beta,\lambda,d)=A^*(\delta,\beta,\lambda)$.
\end{proof}

\section{Diffusive behavior}\label{diffusivesection}
This section is devoted to an in-depth understanding of the long-term movement of the walk both inside the hypercube and also when approaching the boundary, which will serve as the core of the proof of Theorem \ref{MAINTHEOREM}. The relevant quantity to follow is a network-adapted orthogonal projection onto the diagonal of the hypercube. For a point $\bold p\in [0,1]^d$, we define its (weighted) barycenter by
\begin{equation}\label{barycenterdef}
    \overline p:=\langle \bold p,\bold 1\rangle.
\end{equation}
Let $\bold p_0\in [0,1]^d$, and $\bold p(k)\sim K^{*k}_{A,C}(\bold p_0)$. The main intuition behind the $A^2$ scaling appearing in Theorem \ref{MAINTHEOREM} is twofold: the evolution of $\overline p(k)$, the barycenter of $\bold p(k)$, resembles that of a simple random walk, and by the time $\overline p(k)$ gets close to stationarity, all coordinates of $\bold p(k)$ are already clustered together. The first claim is misleading -- the evolution of $\overline p(k)$ involves jumps and the knowledge of the whole chain -- but we will show how a certain diffusive-like behavior emerges when $A$ is sufficiently large. As for the second claim, since we work in the Wasserstein metric \eqref{metricwecare}, Lemma \ref{largedeviations} will suffice.
\subsection{Anticoncentration vs. concentration}
Before analyzing the walker's barycenter, we need to properly understand the stationary distribution $\pi_{A,C}$, for $C$ fixed and $A$ large enough. Under $\pi_{A,C}$ we expect the barycenter to be roughly uniformly distributed on the unit interval. We are able to obtain the following quantitative statement, which gives sharp anti-concentration bounds. In the following, $\overline \pi_{A,C}$ will denote the distribution of $\overline p$, where $\bold p\sim \pi_{A,C}$.
\begin{lemma}\label{spreadforstat}
Let $C$ be a network on $d$ vertices satisfying Assumption \ref{assumption}. Let $\delta>0$ and let $A$ be such that
\begin{equation}\label{howlargeA}
    A^2\geq \frac{\sqrt e d}{2\lambda \beta\delta^3}.
\end{equation}
Then, for every $s\in (\delta,1-\delta)$, one has 
\begin{equation}\label{LOWERBOUNDSTAT}
    \overline{\pi}_{A,C}([s,1-s])\leq \frac{1-2s}{(1-2\delta)^2}.
\end{equation}
In particular, for any $\delta>0$ and all $A\ge A^*(\delta,\beta,\lambda)$, if $\overline p\sim \overline{\pi}_{A,C}$ then 
\begin{equation}\label{LOWERBOUND_STAT}
    \mathbb E\Big(\Big|\overline p-\frac{1}{2}\Big|\Big)\geq \frac{1}{4}-\delta.
\end{equation}
\end{lemma}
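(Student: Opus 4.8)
The plan is to prove the anti-concentration bound \eqref{LOWERBOUNDSTAT} first, and then derive \eqref{LOWERBOUND_STAT} from it by integrating the tail. The idea behind \eqref{LOWERBOUNDSTAT} is a \emph{one-dimensional slicing} of $\pi_{A,C}$ along the barycenter direction: fix $s\in(\delta,1-\delta)$, and compare the mass $\overline\pi_{A,C}([s,1-s])$ with the mass $\overline\pi_{A,C}$ puts on an interval of the same length sitting at one of the \virgolette{far} ends, say near $\delta$. On each fibre $\{\overline p=u\}$ the conditional law of $\bold p$ under $\pi_{A,C}$ has density proportional to $\exp(-A^2\langle\bold p,\Delta\bold p\rangle)$ restricted to the slice of the hypercube $\{\bold p\in[0,1]^d:\langle\bold p,\bold 1\rangle=u\}$; the quadratic form $\langle\bold p,\Delta\bold p\rangle$ depends only on $\bold p-\overline p\bold 1\in\mathcal D^\perp$, so the \emph{shape} of the conditional measure is translation-invariant along $\mathcal D$ — the only $u$-dependence comes from how the affine slice $\{\langle\bold p,\bold 1\rangle=u\}$ meets the cube $[0,1]^d$. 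For $u\in(\delta,1-\delta)$ the slice contains the full ball of radius $\sim\delta$ around $u\bold 1$ in $\mathcal D^\perp$, so the fibre-integral $g(u):=\int_{\{\langle\bold p,\bold 1\rangle=u\}\cap[0,1]^d}e^{-A^2\langle\bold p,\Delta\bold p\rangle}$ is essentially constant there, because under \eqref{howlargeA} the Gaussian factor already concentrates on a region of diameter $O(\sqrt{d/(\lambda A^2)})\ll\delta$ (this is exactly \eqref{spectralbounds} together with Lemma~\ref{lemmaexcursion}-type control), so enlarging the slice beyond radius $\delta$ adds negligible mass.

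Concretely, I would show: (i) $g$ is log-concave in $u$ (Prékopa's theorem, since it is a marginal of the log-concave function $e^{-A^2\langle\bold p,\Delta\bold p\rangle}\mathbbm 1_{[0,1]^d}$); (ii) by symmetry $\bold p\mapsto\bold 1-\bold p$, $g(u)=g(1-u)$, so $g$ is maximized at $u=\tfrac12$ and $g$ is nondecreasing on $[0,\tfrac12]$; (iii) a lower bound $g(u)\ge(1-2\delta)\,g(\tfrac12)$ for $u\in[\delta,1-\delta]$: from log-concavity and $g(\delta)=g(1-\delta)$ one gets $g(u)\ge g(\delta)$ on $[\delta,1-\delta]$, and the bound $g(\delta)\ge(1-2\delta)^{?}g(\tfrac12)$... — actually the cleanest route is to bound the \emph{ratio} directly: for $u\in(\delta,1-\delta)$ the slice contains a translate of the $u=\tfrac12$ fibre shrunk by a factor, but since the Gaussian mass outside a $\delta$-ball is controlled by \eqref{howlargeA} (the exponent $-A^2\lambda\delta^2$ beats the polynomial volume), one gets $g(u)\ge(1-2\delta)^2\,g(\tfrac12)$ uniformly. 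Then
\begin{align*}
\overline\pi_{A,C}([s,1-s])=\frac{\int_s^{1-s}g(u)\,du}{\int_0^1 g(u)\,du}\le\frac{(1-2s)\,g(\tfrac12)}{(1-2\delta)^2\int_\delta^{1-\delta}g(u)\,du}\le\frac{(1-2s)}{(1-2\delta)^2},
\end{align*}
using $\int_\delta^{1-\delta}g\ge(1-2\delta)\cdot(1-2\delta)g(\tfrac12)$ in the denominator — I will arrange the constant bookkeeping so that exactly the stated $(1-2\delta)^{-2}$ survives.

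For the second assertion \eqref{LOWERBOUND_STAT}, write $\mathbb E|\overline p-\tfrac12|=\int_0^{1/2}\mathbb P(|\overline p-\tfrac12|\ge t)\,dt=\int_0^{1/2}\overline\pi_{A,C}\big([\tfrac12-t,\tfrac12+t]^c\big)\,dt$, which equals $\int_0^{1/2}\big(1-\overline\pi_{A,C}([\tfrac12-t,\tfrac12+t])\big)\,dt$. Apply \eqref{LOWERBOUNDSTAT} with $s=\tfrac12-t$ for $t\le\tfrac12-\delta$ to get $\overline\pi_{A,C}([\tfrac12-t,\tfrac12+t])\le\frac{2t}{(1-2\delta)^2}$, hence $1-\overline\pi_{A,C}(\cdots)\ge 1-\frac{2t}{(1-2\delta)^2}$, and integrate; the contribution from $t\in[\tfrac12-\delta,\tfrac12]$ is at most $\delta$ and is discarded. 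A short computation gives $\int_0^{1/2}(1-\tfrac{2t}{(1-2\delta)^2})^+\,dt=\tfrac14+O(\delta)$ after absorbing lower-order terms, which yields $\ge\tfrac14-\delta$ once $A\ge A^*(\delta,\beta,\lambda)$ is taken large enough to satisfy \eqref{howlargeA} (with $\delta$ possibly shrunk by an absolute constant).

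The main obstacle is step (iii): turning the qualitative \virgolette{the fibre-integral is essentially flat on $[\delta,1-\delta]$} into the clean quantitative ratio bound $g(u)\ge(1-2\delta)^2 g(\tfrac12)$ with the \emph{exact} exponent stated. This is where condition \eqref{howlargeA} — with its precise $\delta^3$, $\lambda$, $\beta$, $d$ dependence — must be used: one needs the Gaussian-type tail $\int_{\|\bold x\|\ge\delta}e^{-A^2\langle\bold x,\Delta\bold x\rangle}d\bold x$ on $\mathcal D^\perp$ (or rather on the affine slice) to be a factor $o(\delta)$ smaller than the bulk integral, and the factor $\sqrt e$ in \eqref{howlargeA} strongly suggests a bound of the form $e^{-A^2\lambda\delta^2}\le(\text{poly})\cdot\delta^{?}$ obtained by optimizing, together with $\beta$ entering through the minimal variance $\sigma_i^2=1/(2A^2c_i)\le1/(2A^2\beta)$ (cf.\ the remark after Lemma~\ref{truncatedstuff}) and $d$ through the dimension of $\mathcal D^\perp$. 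Getting the slicing-volume comparison and this tail estimate to combine into precisely $(1-2\delta)^2$, rather than some messier constant, will require care but no new ideas.
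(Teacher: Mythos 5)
Your approach is essentially the same as the paper's: fibre the measure along the barycenter, observe the fibre-integral $g(u)$ is nearly constant on $[\delta,1-\delta]$ because the Gaussian $e^{-A^2\langle\bold q,\Delta\bold q\rangle}$ on $\mathcal D^\perp$ concentrates in a ball of radius $O(\delta)$ under \eqref{howlargeA}, and then integrate. The tail estimate you identify in step~(iii) as the main obstacle is exactly what the paper does, via a Markov-type bound using $\langle\bold q,\Delta\bold q\rangle\ge\lambda\beta A^2\delta^2$ on the complement of $[-A\delta\bold 1,A\delta\bold 1]$.

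Two remarks. First, the log-concavity (Pr\'ekopa) and symmetry arguments you invoke to locate the maximum of $g$ at $u=\tfrac12$ are not needed: the paper compares the mass on $[s,1-s]$ directly to the mass on $[\delta,1-\delta]$, writing $\overline\pi_{A,C}([s,1-s])\le \overline\pi_{A,C}([s,1-s])/\overline\pi_{A,C}([\delta,1-\delta])$, and then only uses the two-sided density ratio bound $\sup_{t,t'\in[\delta,1-\delta]}g(t)/g(t')\le 1/(1-2\delta)$; no knowledge of where $g$ is maximized, nor of the normalizer $\int_0^1 g$, is required. Second, your final display as written does not close: the last inequality needs $\int_\delta^{1-\delta}g\ge g(\tfrac12)$, which is false for $\delta>0$ when $g$ peaks at $\tfrac12$. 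You flag this yourself (``I will arrange the constant bookkeeping\ldots''), and the fix is exactly the paper's normalization by $\overline\pi_{A,C}([\delta,1-\delta])$ instead of by $\int_0^1 g$, which converts the $1/(1-2\delta)$ density ratio plus the length ratio $(1-2s)/(1-2\delta)$ into the stated $(1-2s)/(1-2\delta)^2$ with no extra factor. Your derivation of \eqref{LOWERBOUND_STAT} from \eqref{LOWERBOUNDSTAT} by integrating the tail and shrinking $\delta$ matches the paper.
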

\begin{proof}
With a bit of abuse of notation, let $\overline{\pi}_{A,C}(t)$ denote also the density of $\overline{\pi}_{A,C}$ at $t\in [0,1]$. Write $\bold p=\overline p\bold 1+\bold q$ with $\bold q\in \mathcal D^{\perp}$. By means of \eqref{hereislap} we can write
\begin{equation}\label{tindependent}
    \overline{\pi}_{A,C}(t)\propto \int_{\bold q\in [-t\bold 1, (1-t)\bold 1]\cap \mathcal D^{\perp}}e^{-A^2\langle \bold q,\Delta\bold q\rangle }d\bold q.
\end{equation}
For any $t\in [0,1]$, the right side in \eqref{tindependent} is upper bounded by
\begin{align*}
    \int_{\mathcal D^{\perp}} e^{-A^2\langle \bold q,\Delta\bold q\rangle }d\bold q=\frac{Z(C)}{A^{d-1}}, \quad Z(C):=\int_{\mathcal D^{\perp}} e^{-\langle \bold q,\Delta\bold q\rangle }d\bold q<+\infty,
\end{align*}
where the $(d-1)$-scaling appears since we are integrating on a $(d-1)$ subspace. 
On the other hand, for $t\in (\delta ,1-\delta)$ we have
\begin{align*}
    \frac{Z(C)}{A^{d-1}}-\overline{\pi}_{A,C}(t)&\leq \frac{1}{A^{d-1}}\int_{\mathcal D^{\perp}\cap [-A\delta \bold 1,A\delta\bold 1]^{c}}e^{- \langle\bold q,\Delta \bold q \rangle }d\bold q\\&\leq \frac{1}{A^{d-1}}\frac{1}{\lambda \beta A^{2}\delta^2}\int_{\mathcal D^{\perp}}e^{-\langle \bold q,\Delta \bold q\rangle}\langle \bold q,\Delta\bold q\rangle d\bold q\\&
    \leq \frac{Z(C)}{A^{d-1}}\frac{\sqrt e d}{\lambda \beta A^2\delta^2}.
\end{align*}
Here, in the second last inequality we used that, owing to \eqref{inner} and \eqref{spectralbounds}, 
\begin{align*}
    \langle \bold q, \Delta\bold q\rangle \geq \lambda \langle\bold q,\bold q\rangle\geq \lambda\beta \sum\limits_{i\in [d]}q_i^2 \geq \lambda \beta A^2\delta^2,\quad \bold q\in [-A\delta\bold 1, A\delta\bold 1]^{c},
\end{align*}
while the last inequality follows from $x\leq e^{x}$ ($x\in \mathbb R$), a change of variables and 
\begin{align*}
    \Big(1-\frac{1}{d}\Big)^{d-1}\geq \frac{1}{e}, \quad d>1.
\end{align*}
Altogether, for $A$ as in \eqref{howlargeA}, we obtain 
\begin{align*}
\sup_{t,t'\in [\delta,1-\delta]}\frac{\overline {\pi}_{A,C}(t)}{\overline {\pi}_{A.C}(t')}\leq \frac{1}{1-2\delta}.  
\end{align*}
Therefore, for $s\in [\delta,1-\delta]$, 
\begin{align*}
\overline{\pi}_{A,C}([s,1-s])\leq \frac{\overline{\pi}_{A,C}([s,1-s])}{\overline{\pi}_{A,C}([\delta,1-\delta])}\leq \frac{1-2s}{(1-2\delta)^2},
\end{align*}
which concludes the proof of \eqref{LOWERBOUNDSTAT}. As for \eqref{LOWERBOUND_STAT}, fix $\delta>0$, and apply \eqref{LOWERBOUNDSTAT} with $\delta'=\delta'(\delta)$ such that 
\begin{align*}
    1-\frac{1}{(1-2\delta')^2}\leq \frac{\delta}{2}, \quad \delta'\leq \frac{\delta}{2},
\end{align*}
and choose $A$ large accordingly. Then
\begin{align*}
\mathbb E\Big(\Big|\overline p-\frac{1}{2}\Big|\Big)&=\int_{0}^{\frac{1}{2}}\Big(1-\overline \pi_{A,C}([s,1-s])\Big)ds\\&
\geq \int_0^{\frac{1}{2}-\delta'}(1-(1-2s))ds-\frac{\delta}{2} \\&\geq \int_0^{\frac{1}{2}}2sds-\delta\\&=\frac{1}{4}-\delta.
\end{align*}
\end{proof}
Consider now the situation where $\bold p_0=\frac{1}{2}\bold 1$. After few steps of the Gibbs sampler, coordinates will not have enough time to travel away from the middle. This is done by a concentration argument.
\begin{lemma}\label{barycenterdidnottravel}
Let $C$ be a network on $d$ vertices satisfying Assumption \ref{assumption}, and let $A>0$, $k\in \mathbb N$. Let $\bold p_0=\frac{1}{2}\bold 1$, $\bold p(k)\sim K_{A,C}^{*k}(\bold p_0)$ with barycenter $\overline p(k)$. Then, for every $\delta>0$, 
\begin{align*}
\mathbb E\Big[\Big(\overline p(k)-\frac{1}{2}\Big)^2\Big]\leq \frac{27k\gamma}{\lambda A^2}.
\end{align*}
In particular, for $m=\frac{\delta^2}{27}$ and $k\leq m\frac{\lambda}{\gamma}A^2$, 
\begin{equation}\label{LOWERBOUNDGIBBS}
    \mathbb E\Big[\Big|\overline p(k)-\frac{1}{2}\Big|\Big]\leq \delta.
\end{equation}
\end{lemma}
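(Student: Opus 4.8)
\emph{Overview.} I would track the conditional one–step evolution of the barycenter $\overline p(t)=\langle\bold p(t),\bold 1\rangle$ and split it into a martingale part, whose quadratic variation is controlled by the clustering functional $\langle\bold p(t),\Delta\bold p(t)\rangle$ (already bounded by \eqref{excursionbound}, since $\bold p_0=\frac12\bold 1\in\mathcal D$), plus a drift part that I want to show is of lower order. The $L^1$ bound \eqref{LOWERBOUNDGIBBS} then follows from the $L^2$ bound by Jensen.

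\emph{One–step decomposition.} Write $\bold q=\bold p(t-1)$. By \eqref{totalstep} and $\langle\Pi_i\bold v,\bold 1\rangle=c_iv_i$, conditioning on the coordinate $i$ being updated gives $\overline p(t)-\overline p(t-1)=c_i\bigl(-(\Delta\bold q)_i+\epsilon_i(\sigma_i^2,\hat q_i)\bigr)$. Averaging over $i$ and using $\sum_ic_i(\Delta\bold q)_i=\langle\Delta\bold q,\bold 1\rangle=\langle\bold q,\Delta\bold 1\rangle=0$, the conditional drift is
\begin{equation*}
D_t:=\mathbb E\bigl[\overline p(t)-\overline p(t-1)\,\big|\,\mathcal F_{t-1}\bigr]=\frac1d\sum_{i\in[d]}c_i\,\mathbb E\bigl[\epsilon_i(\sigma_i^2,\hat q_i)\bigr],
\end{equation*}
so that $\overline p(k)-\frac12=M_k+\sum_{t=1}^kD_t$ with $(M_k)$ a martingale. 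For a single step, the law of total variance over the choice of $i$, the variance bounds \eqref{varup}, $\sum_ic_i^2(\Delta\bold q)_i^2\le(\max_ic_i)\langle\Delta\bold q,\Delta\bold q\rangle\le2\gamma\langle\bold q,\Delta\bold q\rangle$ (Lemma~\ref{cutefact} and $\lambda_d\le2$), and $\sigma_i^2=\frac1{2A^2c_i}$, give $\Var\bigl(\overline p(t)-\overline p(t-1)\mid\mathcal F_{t-1}\bigr)\le\frac{4\gamma}{d}\langle\bold q,\Delta\bold q\rangle+\frac{C}{dA^2}$. Taking expectations, applying \eqref{excursionbound}, and absorbing the last term via $\gamma\ge\frac1d$ (from $\max_ic_i\le\gamma$) and $\lambda\le2$, one obtains $\mathbb E[M_k^2]=\sum_{t=1}^k\mathbb E\bigl[\Var(\overline p(t)-\overline p(t-1)\mid\mathcal F_{t-1})\bigr]\le\frac{C' k\gamma}{\lambda A^2}$, with $C'$ explicit and, with a little care, small enough that the final constant below is $27$.

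\emph{The drift — the main obstacle.} One might hope $(\overline q-\frac12)D_t\le0$, but this is false: a configuration $\bold q$ with coordinates straddling a face of the cube can produce $(\overline q-\frac12)D_t>0$. What is true is that the drift is \emph{negligible}: by Lemma~\ref{truncatedstuff}(2)--(3), $|\mathbb E[\epsilon_i(\sigma_i^2,\hat q_i)]|\le2\sigma_i e^{-\hat d_i^2/(2\sigma_i^2)}$ with $\hat d_i=\min(\hat q_i,1-\hat q_i)$, so $D_t$ is exponentially small in $A^2$ unless some $\hat q_i$ lies within $O(\sigma_i)=O(A^{-1})$ of $\{0,1\}$. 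Using \eqref{excursionbound} together with $\langle\widehat{\bold q}-\overline q\bold 1,\widehat{\bold q}-\overline q\bold 1\rangle\le\gamma^2\langle\bold q-\overline q\bold 1,\bold q-\overline q\bold 1\rangle\le\frac{\gamma^2}{\lambda}\langle\bold q,\Delta\bold q\rangle$ (from \eqref{spectralbounds}), the $\widehat{\bold q}$–coordinates concentrate around $\overline q$, so a coordinate reaches an $O(A^{-1})$–neighborhood of a boundary only when $\overline q$ itself is within $O(\sqrt d\,A^{-1})$ of $\{0,1\}$ — and there all $\hat q_i$ lie on the same side of $\frac12$, which forces $D_t$ to point back toward $\frac12$. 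Thus $(\overline q-\frac12)D_t$ is either $\le0$ (barycenter near a face) or exponentially small in $A^2$ (barycenter in the bulk). Making this dichotomy quantitative — via a stopping time at the first excursion of $\langle\bold p(t),\Delta\bold p(t)\rangle$ or of $\overline p(t)$ past suitable thresholds, together with a union bound in the spirit of Lemma~\ref{largedeviations}, and noting that once $\overline p$ has genuinely moved one has $k\gtrsim\lambda A^2/\gamma$ and $\frac{27k\gamma}{\lambda A^2}\ge\frac14\ge(\overline p(k)-\frac12)^2$ trivially — yields $\mathbb E\bigl[(\sum_tD_t)^2\bigr]\le\frac{C'' k\gamma}{\lambda A^2}$. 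This self–consistent (bootstrap) estimate is where the real work lies.

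\emph{Conclusion.} Combining $\overline p(k)-\frac12=M_k+\sum_tD_t$ with Cauchy--Schwarz,
\begin{equation*}
\mathbb E\Bigl[\bigl(\overline p(k)-\tfrac12\bigr)^2\Bigr]\le\Bigl(\sqrt{\mathbb E[M_k^2]}+\sqrt{\mathbb E[(\textstyle\sum_tD_t)^2]}\Bigr)^2\le\frac{27k\gamma}{\lambda A^2}
\end{equation*}
after tracking the constants. Finally, $\mathbb E\bigl[|\overline p(k)-\frac12|\bigr]\le\sqrt{\mathbb E[(\overline p(k)-\frac12)^2]}\le\sqrt{27k\gamma/(\lambda A^2)}\le\delta$ as soon as $k\le\frac{\delta^2}{27}\,\frac{\lambda}{\gamma}A^2$, which is \eqref{LOWERBOUNDGIBBS} with $m=\delta^2/27$.
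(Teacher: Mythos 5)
Your overall strategy—decompose $\overline p(k)-\tfrac12$ into a martingale part plus an accumulated drift, bound the quadratic variation of $M_k$ by the clustering functional via \eqref{excursionbound}, and then recombine—is a legitimate route, and your one-step formula and the variance estimate for $M_k$ are essentially correct. However, it is genuinely different from the paper's, and the crucial step in your version is left as an outline with a real gap.

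The paper does \emph{not} separate a martingale from a drift. It computes $\mathbb E\bigl[(\overline p(t)-\tfrac12)^2-(\overline p(t-1)-\tfrac12)^2\,\big|\,\bold p(t-1)\bigr]$ directly, obtaining a cross term $E_1=2\,\mathbb E\bigl[(\overline q-\tfrac12)D_t\bigr]$ and a pure variance term, and then telescopes. The decisive observation, which is absent from your proposal, is that $\overline q-\tfrac12=\sum_i c_i(\hat q_i-\tfrac12)$ (using $\Delta\bold q\in\mathcal D^\perp$), so $E_1$ expands as $\frac{2}{d}\sum_{i,j}c_ic_j(\hat q_i-\tfrac12)\mathbb E[\epsilon_j(\sigma_j^2,\hat q_j)]$. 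By the sign structure from \eqref{explicitmean}, a summand can be positive only when $\hat q_i-\tfrac12$ and $\hat q_j-\tfrac12$ have opposite signs, in which case $|\hat q_i-\tfrac12|\le|\hat q_i-\hat q_j|$; this replaces the macroscopic quantity $|\overline q-\tfrac12|$ by the microscopic $|\hat q_i-\hat q_j|$, which is then controlled by Cauchy--Schwarz, \eqref{spectralbounds}, and \eqref{excursionbound}. The point is that the cross term is bounded \emph{per step in expectation} with no need for any excursion control, large-deviations, or bootstrap, and the resulting bound holds for all $A>0$.

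Your version, by contrast, needs $\mathbb E\bigl[(\sum_{t\le k}D_t)^2\bigr]\lesssim k\gamma/(\lambda A^2)$, and this is where the gap lies. You give a qualitative dichotomy (drift exponentially small in the bulk, pointing back near faces) and wave at a stopping-time/large-deviation/bootstrap argument, but you do not execute it. As it stands, the naive bound $|D_t|\lesssim A^{-1}d^{-1/2}$ only yields $\mathbb E[(\sum_t D_t)^2]\lesssim k^2/(dA^2)$, which is off by a factor of $k$, so the cancellation you need really does require proof. Moreover, making your dichotomy quantitative would rely on Lemma~\ref{largedeviations}, whose hypotheses ($A^2\ge\frac{1}{2\beta}$ and $A\ge A^*$) restrict the range of $A$, whereas the stated $L^2$ bound is claimed for all $A>0$; you would at least need to separately dispatch small $A$ (where the bound is trivial since the right side exceeds $\tfrac14$). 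In short: your decomposition is sound and the $M_k$ estimate is fine, but the drift estimate is asserted rather than proved, and the paper's sign trick on the cross term is both simpler and strictly stronger, sidestepping the bootstrap entirely.
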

\begin{proof}
Since the walk is invariant under reflection of the hypercube with respect to $\bold p_0$, we get immediately $\mathbb E(\overline p(k))\equiv \frac{1}{2}$ for all $k$. For the sake of convenience, in the following we write $\bold q:=\bold p(k-1), \bold q':=\bold p(k)$ (and, correspondingly, $\overline q, \overline q'$ for their barycenters), and drop the dependence on $k$ for the moment. From \eqref{totalstep} and \eqref{barycenterdef}, conditional on index $i$ being chosen, we get
\begin{align*}
    \Big(\overline q'-\frac{1}{2}\Big)^2-\Big(\overline q-\frac{1}{2}\Big)^2=&2\Big(\overline q-\frac{1}{2}\Big)\langle -\Delta(\bold q)+\boldsymbol{\epsilon}(\Sigma, \hat{\bold q}),\Pi_i\bold 1\rangle\\&+\Big(\langle -\Delta \bold q+\boldsymbol{\epsilon}(\Sigma, \hat{\bold q}), \Pi_i \bold 1\rangle\Big)^2.
\end{align*}
Notice the identities
\begin{align*}
  \sum_{i\in [d]}\Pi_i\bold v=\bold v,\quad \sum\limits_{i\in [d]}|\langle \bold v,\Pi_i\bold 1\rangle|^2=\langle D\bold v,\bold v\rangle,\quad \bold v\in\mathbb R^d,  
\end{align*} 
and the fact $\Delta \bold q\in\mathcal D^{\perp}$.
 We now take expectation on both sides and use the tower property to exploit the above (first conditioning with respect to $\bold q$, then with respect to $\boldsymbol \epsilon$).
The right side then becomes
\begin{align*}
     \frac{2}{d}\mathbb E\Big(\Big(\overline q-\frac{1}{2}\Big)\langle \mathbb E\Big(\boldsymbol{\epsilon}(\Sigma, \hat{\bold q})\Big|\bold q\Big),\bold 1\rangle\Big)+\frac{1}{d}\mathbb E\Big(\langle D\Big(\Delta \bold q+\boldsymbol{\epsilon}(\Sigma, \hat{\bold q})\Big),\Big(\Delta\bold q+\boldsymbol{\epsilon}(\Sigma, \hat{\bold q})\Big)\rangle\Big).
\end{align*}
Since $\Delta\bold q\in \mathcal D^{\perp}$, $\overline q-\frac{1}{2}=\langle \bold q-\frac{1}{2}\bold 1,\bold 1\rangle=\langle \bold{\hat q}-\frac{1}{2}\bold 1,\bold 1\rangle$. Therefore, if we expand the inner products in the first summand and apply Cauchy-Schwartz on the second one,
\begin{align*}
  \mathbb E\Big[\Big(\overline q'-\frac{1}{2}\Big)^2-\Big(\overline q-\frac{1}{2}\Big)^2\Big]\leq &\frac{2}{d}\sum_{i,j\in [d]}c_ic_j\mathbb E\Big(\Big(\hat q_i-\frac{1}{2}\Big)\mathbb E(\epsilon_j(\sigma_j^2,\hat q_j)|\bold q)\Big)\\&+\frac{2}{d}\mathbb E(\langle D\Delta\bold q,\Delta\bold q \rangle)\\&+\frac{2}{d}\mathbb E(\langle D \boldsymbol{\epsilon}(\Sigma, \hat{\bold q}),\boldsymbol{\epsilon}(\Sigma, \hat{\bold q}) \rangle)\\&=:E_1+E_2+E_3. 
\end{align*}
We start bounding $E_1$. From \eqref{explicitmean}, positive contribution only comes from terms where $\hat q_i-\frac{1}{2}$ and $\hat q_j-\frac{1}{2}$ have opposite signs, in which case $|\hat q_i-\frac{1}{2}|\leq |\hat q_i-\hat q_j|$. Thanks to this observation,
\begin{align*}
    E_1&\leq \frac{2}{d}\sum_{i,j\in [d]}c_ic_j\mathbb E\Big(|(\hat q_i-\hat q_j)\mathbb E(\epsilon_j(\sigma_j^2,\hat q_j)|\hat q_j)|\Big)\\
    &\leq \frac{2}{d}\sum_{i,j\in [d]}c_ic_j\mathbb E|\hat q_i-\hat q_j|\frac{\sqrt 2}{\sqrt{c_j}A}\\
    & \leq \frac{2\sqrt 2}{dA}\Big(\sum_{i,j\in [d]}c_ic_j\mathbb E|\hat q_i-\hat q_j|^2\Big)^{\frac{1}{2}}\Big(\sum_{i,j\in [d]}c_i\Big)^{\frac{1}{2}}\\
    &= \frac{4}{A\sqrt{d}}\Big(\mathbb E(\langle \hat{\bold q}-\overline q\bold 1,\hat{\bold{q}}-\overline q\bold 1\rangle)\Big)^{\frac{1}{2}}\,
\end{align*}
where we used \eqref{explicitmean}, the Cauchy-Schwartz inequality and the identity 
\begin{align*}
    \sum_{i,j\in [d]}c_ic_j|\hat q_i-\hat q_j|^2=2\langle \hat{\bold q}-\overline q\bold 1,\hat{\bold q}-\overline q\bold 1 \rangle.
\end{align*}
Finally, using \eqref{lambdastuff}, \eqref{spectralbounds} and \eqref{excursionbound} we obtain
\begin{align*}
    E_1&\leq \frac{4\gamma}{A\sqrt d}\Big(\mathbb E(\langle\bold q-\overline q\bold 1,\bold q-\overline q\bold 1\rangle)\Big)^{\frac{1}{2}}\\&\leq 
    \frac{4\gamma}{A\sqrt d\sqrt{\lambda}}\Big(\mathbb E(\langle \bold q,\Delta\bold q\rangle)\Big)^{\frac{1}{2}}\\&\leq \frac{2\sqrt{10}\gamma}{\lambda A^2}
\end{align*}
As for $E_2$, by means of \eqref{spectralbounds}, \eqref{excursionbound} and Lemma \ref{cutefact} we can bound 
\begin{align*}
    E_2&\leq \frac{4}{d}\max_{i\in [d]}c_i\mathbb E(\langle \Delta\bold q,\bold q\rangle )\\&\leq \frac{10\gamma}{\lambda A^2}.
\end{align*}
Finally, for $E_3$ we can use \eqref{varup}, Lemma \ref{cutefact} and $\lambda\leq 2$ to obtain 
\begin{align*}
    E_3&\leq \frac{2\max_{i\in [d]}c_i}{d}\sum_{i\in [d]}c_i\frac{5}{2c_iA^2}\\&\leq \frac{10\gamma}{\lambda A^2}.
\end{align*}
Combining the bounds for $E_1, E_2, E_3$ and using a telescoping sum, we obtain 
\begin{align*}
    \mathbb E\Big[\Big(\overline p(k)-\frac{1}{2}\Big)^2\Big]\leq \frac{27k\gamma}{\lambda A^2}.
\end{align*}
The last statement then follows at once from Jensen inequality.
\end{proof}

The claim of Lemma \ref{barycenterdidnottravel} is in line with simulation results as shown below in Figure \ref{fig:variancegrowth} for the complete graph with uniform weights, for which $\frac{\lambda}{\gamma}A^2=dA^2$. Observe that the graph stabilizes around $\approx\frac{1}{12}$ when $A$ large, corresponding to the variance of the uniform distribution on $[0,1]$ (a good approximation of $\overline \pi_{A,C}$ for $A$ large because of Lemma \ref{spreadforstat}). The scaling of the variance -- quadratic in $A$, linear in $d$ -- agrees with our bound.
\begin{figure}[h]
  \centering
  \subfloat[$d=4,A=150$]{
    \includegraphics[width=0.45\textwidth]{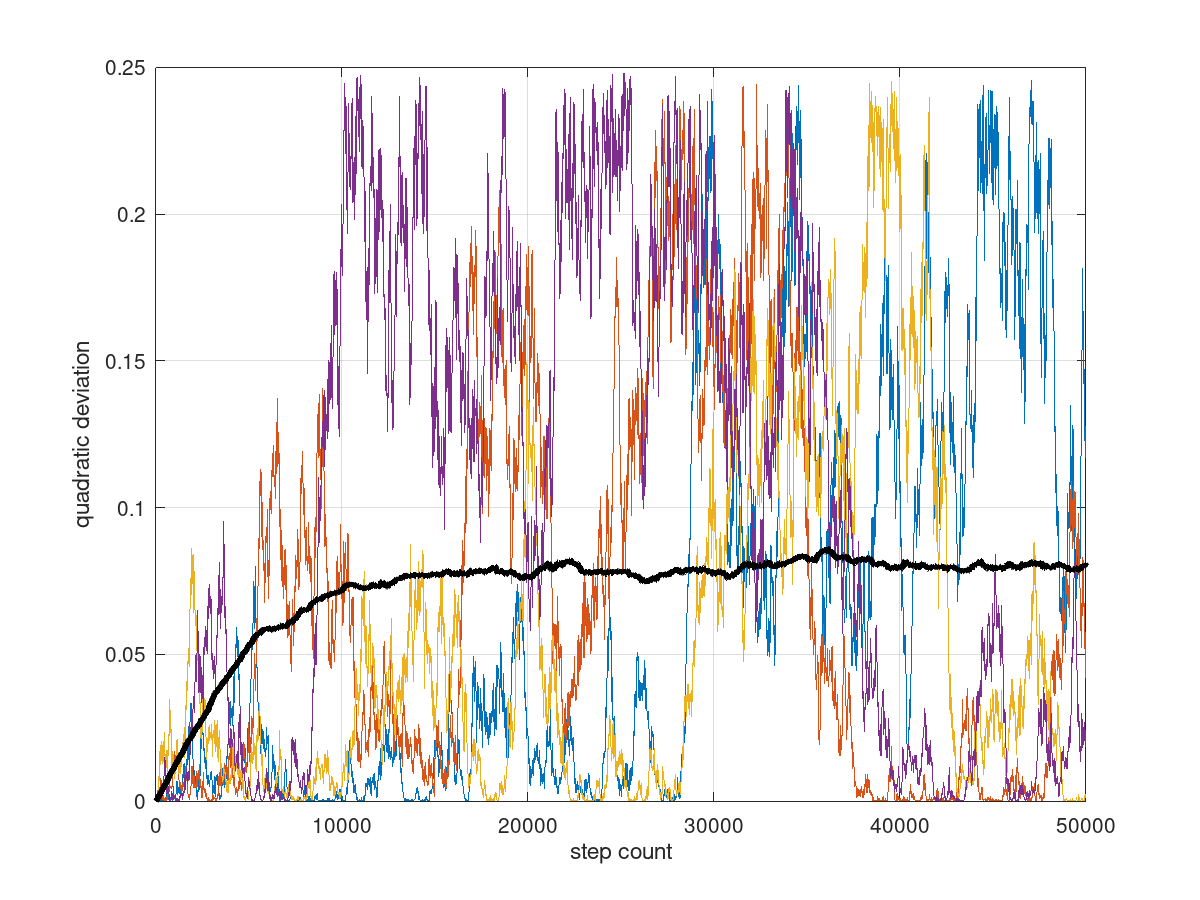}
  }
  \subfloat[$d=4,A=300$ ]{
    \includegraphics[width=0.45\textwidth]{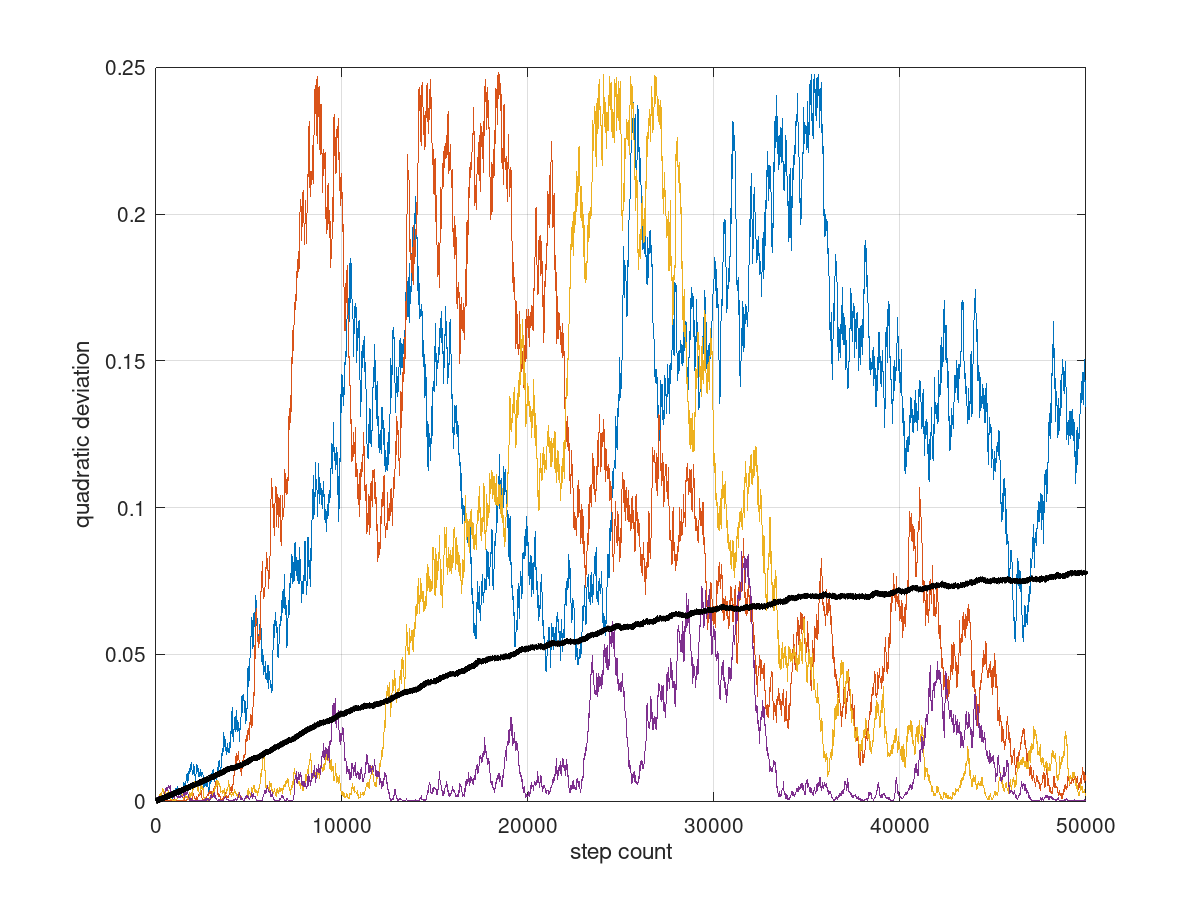}
  }
  
  \subfloat[$d=8,A=150$ ]{
    \includegraphics[width=0.45\textwidth]{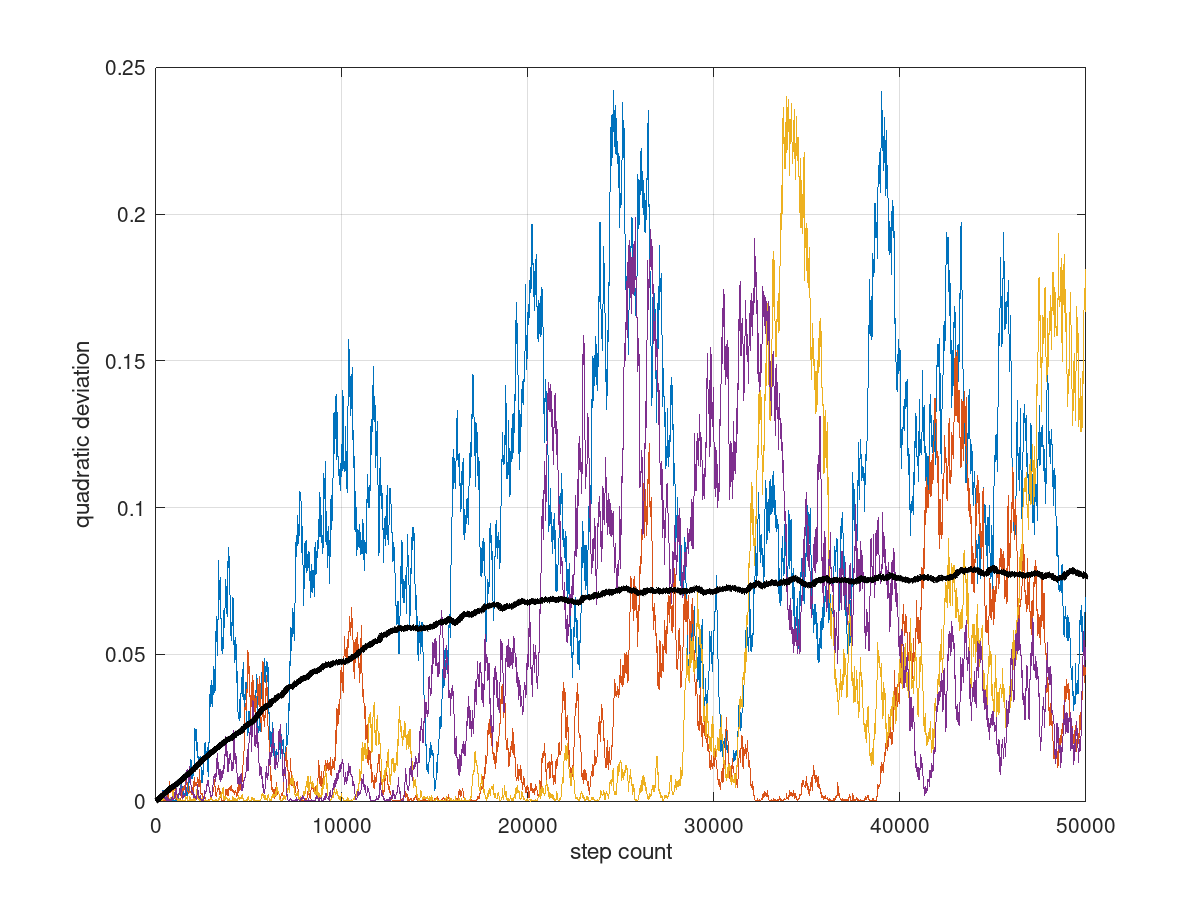}
  }
  \subfloat[$d=8,A=300$ ]{
    \includegraphics[width=0.45\textwidth]{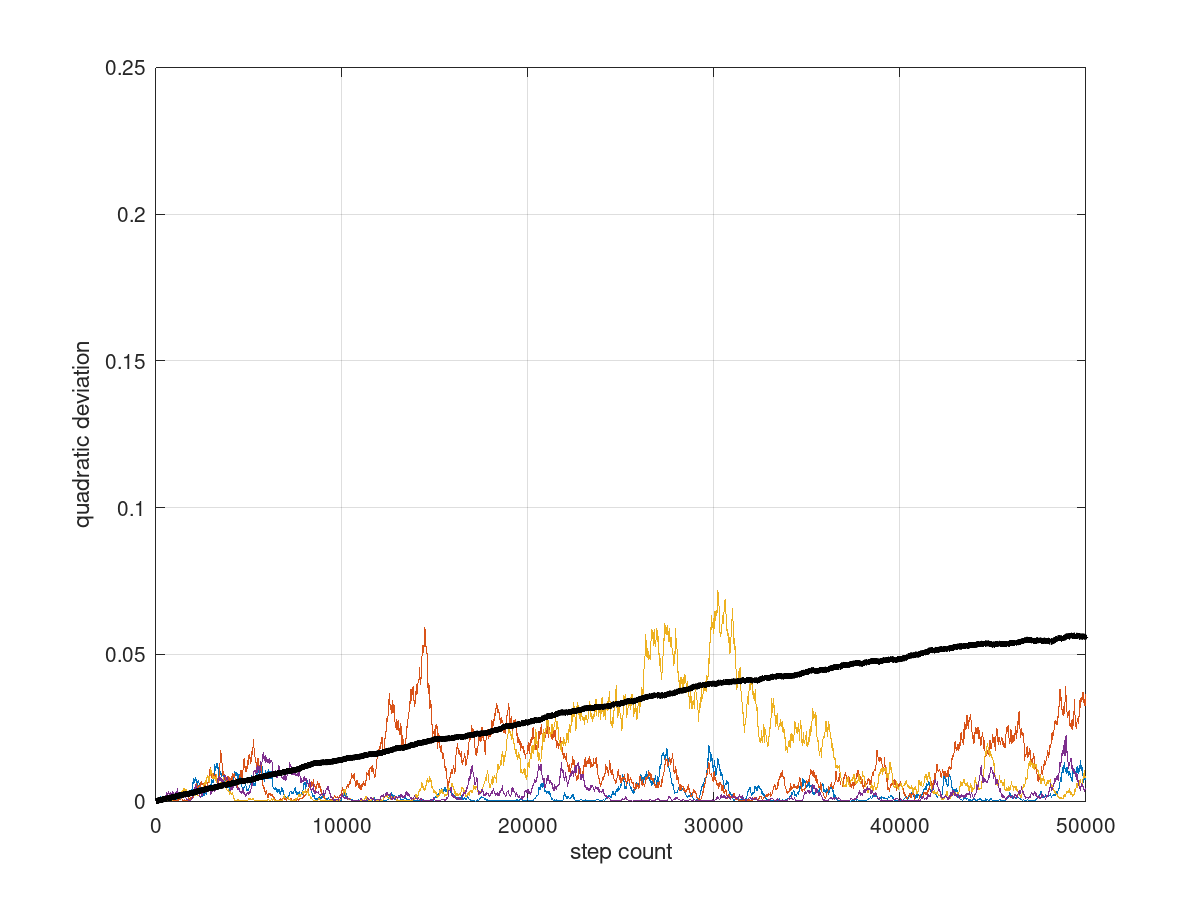}
  }
  \caption{A few sample paths of the evolution of the squared distance of the barycenter from $\frac 1 2$ together with an empirical mean of 1000 runs for each, for different values of $d$ and $A$, considering the complete graph with uniform weights.}
  \label{fig:variancegrowth}
\end{figure}

\subsection{A stopping time for the boundary}
In this subsection, let $\bold p_0:=\bold 0$ and let $\bold p(k)\sim K^{*k}_{A,C}(\bold p_0)$. For $\delta\in (0,1)$, consider the stopping time
\begin{equation}
    T_{\delta}=\min_{k\in \mathbb N}\Big\{ \max_{i\in [d]}(1-\hat p_i(k))\leq \delta\Big\}.
\end{equation}
Since the chain is ergodic and
\begin{align*}
\pi_{A,C}\Big(\Big\{\max_{i\in [d]}(1-\hat p_i)\leq \delta\Big\}\Big)>0,
\end{align*}
it is easy to show that $T_{\delta}$ has finite expectation. For $H>0$, consider the random variable 
\begin{equation}
    w_H:=\overline p^2(T_{\delta})-HT_{\delta}.
\end{equation}
The intuition is that if $\overline p$ were a simple random walk, $w_H$ would be a sub-martingale for $H$ small enough.  Since, by time $T_{\delta}$, there should not be a significant negative drift, the hope is that for $H=O\Big(A^{-2}\Big)$ the expected value of $w_H$ will be positive, which can then be turned into an upper bound for $\mathbb E(T_{\delta})$. This is done in the following lemma.
\begin{lemma}\label{stoppingtime}
Let $C$ be a network on $d$ vertices satisfying Assumption \ref{assumption}. For $\delta\in (0,1)$, let $A\geq \frac{1}{2\beta}$ and large enough so that
\begin{equation}\label{choiceforH}
    H:=\frac{\rho}{2dA^2}-\frac{2\sqrt 2}{A\sqrt{d}}e^{-\delta^2\beta A^2}>0,
\end{equation}
where $\rho$ is given by Lemma \ref{truncatedstuff}.\\
Then, $\mathbb E(w_{H})>0$. In particular, if $A\geq A^*(\delta,\beta)$, then
\begin{equation}\label{boundonstoppingtime}
    \mathbb E(T_{\delta})\leq \frac{4dA^2}{\rho}.
\end{equation}
\end{lemma}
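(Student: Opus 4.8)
The plan is a drift (Lyapunov) argument with $V(\bold p)=\overline p^{\,2}$, which is valued in $[0,1]$ and automatically near $1$ at time $T_\delta$: since $\langle\widehat{\bold p},\bold 1\rangle=\langle\bold p,\bold 1\rangle$ by \eqref{LAP}, one has $\overline p=\sum_{i}c_i\hat p_i$, so on $\{k=T_\delta\}$ every $\hat p_i\ge1-\delta$ forces $\overline p(T_\delta)\ge1-\delta$. Hence $\mathbb E(w_H)>0$ gives $H\,\mathbb E(T_\delta)<\mathbb E(\overline p^{\,2}(T_\delta))\le1$, and for $A\ge A^*(\delta,\beta)$ the subtracted term in \eqref{choiceforH} obeys $\tfrac{2\sqrt2}{A\sqrt d}e^{-\delta^2\beta A^2}\le\tfrac{\rho}{4dA^2}$, so $H\ge\tfrac{\rho}{4dA^2}$ and $\mathbb E(T_\delta)\le\tfrac{4dA^2}{\rho}$ (using $\beta\le\tfrac1d$, so $A^*$ may be taken independent of $d$). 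Everything thus reduces to a one-step lower bound for the increment of $\overline p^{\,2}$.

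Writing $\bold q=\bold p(k-1)$, expanding $\overline q'=\overline q+c_i(-\Delta q_i+\epsilon_i(\sigma_i^2,\hat q_i))$ conditional on the selected index $i$ (via \eqref{totalstep} and $\langle\Pi_i\bold v,\bold 1\rangle=c_iv_i$), averaging over $i\in[d]$ and using $\langle\Delta\bold q,\bold 1\rangle=0$, gives
\begin{align*}
\mathbb E\big[\overline p^{\,2}(k)-\overline p^{\,2}(k-1)\,\big|\,\mathcal F_{k-1}\big]
&=\frac1d\sum_{i\in[d]}c_i^{\,2}\,\mathbb E\big[(\Delta q_i-\epsilon_i(\sigma_i^2,\hat q_i))^2\,\big|\,\bold q\big]\\
&\qquad+\frac{2\overline q}{d}\sum_{i\in[d]}c_i\,\mathbb E\big[\epsilon_i(\sigma_i^2,\hat q_i)\,\big|\,\bold q\big].
\end{align*}
The first sum equals $\tfrac1d\sum_ic_i^{\,2}\big((\Delta q_i-\mathbb E\epsilon_i)^2+\Var(\epsilon_i)\big)\ge\tfrac1d\sum_ic_i^{\,2}\rho\sigma_i^2=\tfrac{\rho}{2dA^2}$ by Lemma \ref{truncatedstuff}(5) ($A^2\ge\tfrac1{2\beta}$ makes all $\sigma_i\le1$); moreover, by \eqref{longcomplicated} the variance is in fact $(1-o(1))\sigma_i^2$ whenever $\hat q_i$ stays in a fixed sub-interval of $(0,1)$, so on such states the first sum exceeds $\tfrac{\rho}{2dA^2}$ by a \emph{fixed} positive multiple of $\tfrac1{A^2}$. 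For the second sum, $\overline q\ge0$ and Lemma \ref{truncatedstuff}(2)--(3) leave only the indices with $\hat q_i>\tfrac12$ able to contribute negatively, with $\mathbb E[\epsilon_i\mid\bold q]\ge-2\sigma_ie^{-(1-\hat q_i)^2/(2\sigma_i^2)}$; using $c_i\sigma_i=\sqrt{c_i}/(\sqrt2A)$, $\overline q\le1$, $\sum_i\sqrt{c_i}\le\sqrt d$, $c_i\ge\beta$ this sum is $\ge-\tfrac{2\sqrt2}{A\sqrt d}\max_ie^{-(1-\hat q_i)^2\beta A^2}$. Hence the increment is $\ge H$ whenever $\max_i\hat q_i\le1-\delta$; combining with the variance gain, it is $\ge H$ even when merely $\max_i\hat q_i\le1-\delta/2$, since there the polynomial gain overwhelms the exponentially small loss $\tfrac{2\sqrt2}{A\sqrt d}(e^{-\delta^2\beta A^2/4}-e^{-\delta^2\beta A^2})$ for $A$ large.

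The stopping time enters because, before $T_\delta$, one only knows $\hat q_j<1-\delta$ for some $j$, whereas another $\hat q_i$ could be much closer to $1$ — but that would force the coordinates to be spread apart, which is exponentially unlikely by Lemma \ref{largedeviations} since $\bold p_0=\bold 0\in\mathcal D$. I would therefore also run $\tau:=\min\{t:\max_{i,j\in[d]}|p_i(t)-\hat p_j(t)|\ge\delta/4\}$; on $\{t<\tau\}$ the $\hat p_i(t)$ lie within $\delta/2$ of one another, so on $\{t<\tau\}\cap\{t<T_\delta\}$ we get $\max_i\hat p_i(t)\le1-\delta/2$ and hence, by the previous paragraph, a conditional increment $\ge H$. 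Consequently $M_k:=\overline p^{\,2}(k\wedge T_\delta\wedge\tau)-H(k\wedge T_\delta\wedge\tau)$ is a bounded submartingale; since $T_\delta,\tau\ge1$ and, from $\bold p_0=\bold 0$, $\mathbb E(\overline p^{\,2}(1))=\tfrac1d\sum_ic_i^{\,2}\mathbb E[\epsilon_i(\sigma_i^2,0)^2]=(1-o(1))\tfrac1{2dA^2}>H$, optional stopping (justified by boundedness and by the a priori $\mathbb E(T_\delta)<\infty$) gives $\mathbb E(\overline p^{\,2}(T_\delta\wedge\tau))-H\,\mathbb E(T_\delta\wedge\tau)=\mathbb E(M_{T_\delta\wedge\tau})\ge\mathbb E(M_1)\ge c_\rho/A^2$ for a constant $c_\rho=c_\rho(d,\rho)>0$.

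The main obstacle is passing from $T_\delta\wedge\tau$ to $T_\delta$, i.e.\ controlling the event $\{\tau<T_\delta\}$ that declustering precedes hitting the target. I would show it is negligible: Lemma \ref{largedeviations} gives $\mathbb P(\tau\le K)\le13K\,e^{-c(\delta,\beta,\lambda)A}$ for all $K$, Markov gives $\mathbb P(T_\delta>K)\le\mathbb E(T_\delta)/K$, and optimizing over $K$ (with an a priori polynomial-in-$A$ moment bound on $T_\delta$, available from ergodicity as noted before the lemma) makes both $\mathbb P(\tau<T_\delta)$ and $\mathbb E[(T_\delta-\tau)\mathbf 1_{\{\tau<T_\delta\}}]$ smaller than any negative power of $A$. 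Since $\overline p^{\,2}(T_\delta)\ge(1-\delta)^2$ on $\{\tau<T_\delta\}$ as well, one has $\mathbb E(w_H)\ge\mathbb E(M_{T_\delta\wedge\tau})-2\delta\,\mathbb P(\tau<T_\delta)-H\,\mathbb E[(T_\delta-\tau)\mathbf 1_{\{\tau<T_\delta\}}]\ge c_\rho/A^2-o(1/A^2)>0$ for $A$ large, which with the first paragraph completes the proof. The genuinely delicate points are this last estimate and the optional-stopping justification; the computation in the second paragraph is elementary.
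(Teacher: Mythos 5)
Your overall strategy coincides with the paper's: a drift argument with the Lyapunov function $\overline p^{\,2}$, turning a one-step bound $\mathbb E(\overline p^2(j)-\overline p^2(j-1)\mid \bold p(j-1))\geq H$ before $T_\delta$ into $\mathbb E(w_H)>0$, with the same $E_1+E_2$ decomposition of the increment, the same bound $E_2\geq\rho/(2dA^2)$ from Lemma~\ref{truncatedstuff}(5), and the same treatment of $E_1$ via~\eqref{explicitmean} and Cauchy--Schwarz. The paper implements the (sub)martingale step as a direct summation by parts rather than via optional stopping, but that difference is cosmetic.

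Where you diverge is a genuine observation about the text: as printed, $T_\delta=\min\{k:\max_i(1-\hat p_i(k))\leq\delta\}$ stops only when \emph{all} $\hat p_i$ exceed $1-\delta$, so $\{T_\delta>j-1\}$ only forces $\hat q_i<1-\delta$ for \emph{some} $i$, not for every $i$. The paper's sentence ``on $\{T_\delta>j-1\}$ we have $\hat{\bold q}\leq(1-\delta)\bold 1$'' therefore does not follow from the displayed definition, and you are right to flag it. The definition is almost certainly a typo for $\min_i(1-\hat p_i(k))\leq\delta$, i.e.\ $T_\delta$ stops when $\max_i\hat p_i$ \emph{first} reaches $1-\delta$: the inclusion $\{T'_\delta>k\}\subset\{T_\delta>k\}\cup\{\text{decluster}\}$ used in the proof of Theorem~\ref{MAINTHEOREM} remains valid and is in fact sharper with this reading, and the figure caption and data for Figure~\ref{fig:hitmaxrange} describe the hitting time of a threshold by $\max_i\hat p_i$. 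With that reading, $\{T_\delta>j-1\}$ gives $\hat q_i<1-\delta$ for every $i$ immediately, and the lemma closes with no auxiliary stopping time and no appeal to Lemma~\ref{largedeviations}.

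Your patch via $\tau$ is a sensible idea but has a gap that is not merely technical. To pass from $\mathbb E(M_{T_\delta\wedge\tau})$ to $\mathbb E(w_H)$ you need $\mathbb P(\tau<T_\delta)$ and $\mathbb E\big[(T_\delta-\tau)\mathbf 1_{\{\tau<T_\delta\}}\big]$ to be $o(A^{-2})$, and you obtain this by ``optimizing over $K$ with an a priori polynomial-in-$A$ moment bound on $T_\delta$''. No such bound is available at that point: the paper records only $\mathbb E(T_\delta)<\infty$ from ergodicity, and the generic drift-and-minorization estimate is, as the introduction notes, exponentially bad in $A$. Since the entire content of the lemma is precisely a polynomial-in-$A$ bound on $\mathbb E(T_\delta)$, the argument as sketched is circular; optimizing $13K e^{-cA}+\mathbb E(T_\delta)/K$ gives roughly $\sqrt{\mathbb E(T_\delta)e^{-cA}}$, which need not be $o(A^{-2})$ if the a priori bound on $\mathbb E(T_\delta)$ is exponential in $A$. (The aside about an ``extra variance gain'' from $\Var(\epsilon_i)\approx\sigma_i^2$ is also not always available, since starting from $\bold 0$ the $\hat q_i$'s can sit at scale $O(\sigma_i)$; but this part is harmless, as replacing $e^{-\delta^2\beta A^2}$ by $e^{-\delta^2\beta A^2/4}$ in $H$ still gives $H\geq\rho/(4dA^2)$ once $A\geq A^*$.) In short: your catch about $T_\delta$ is legitimate, the intended definition dissolves the issue, and the alternative route you propose does not close as written.
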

\begin{proof}
Since $T_{\delta}$ is finite almost surely, summation by parts gives 
\begin{align*}
    \mathbb E(w_H)&=\sum_{j=1}^{+\infty}\mathbb E\Big(1_{\{T_{\delta}=j\}}(\overline p^2(j)-Hj)\Big)\\&=\sum_{j=1}^{+\infty}\mathbb E\Big(1_{\{T_{\delta}>j-1\}}\Big(\overline p^2(j)-\overline p^2(j-1)-H\Big)\Big).
\end{align*}
Since $\{T_{\delta}>j-1\}$ is measurable with respect to the $\sigma$-algebra generated by $\bold p(j-1)$, it suffices to show that on the relevant event
\begin{equation}\label{equivalentstatement}
    \mathbb E(\overline p^2(j)-\overline p^2(j-1)|\bold p(j-1))\geq H.
\end{equation}
We drop the dependence on $j$, and denote $\bold p(j)=\bold q', \bold p(j-1)=\bold q$. Proceeding as in the proof of Lemma \ref{barycenterdidnottravel}, using \eqref{barycenterdef} and \eqref{totalstep} we obtain
\begin{align*}
 \mathbb E(\overline q'^2-\overline q^2|\bold q)&=\frac{2\overline q}{d}\mathbb\langle \mathbb E(\boldsymbol{\epsilon}(\Sigma, \hat{\bold q})|\bold q),\bold 1\rangle+\frac{1}{d}\mathbb E (\langle D(\Delta \bold q+\boldsymbol{\epsilon}(\Sigma, \hat{\bold q})), \Delta \bold q+\boldsymbol{\epsilon}(\Sigma, \hat{\bold q})\rangle) \\&=:E_1+E_2.
\end{align*}
On the event $\{T_{\delta}>j-1\}$, we have $\bold {\hat q}\leq (1-\delta)\bold 1$. If we expand the inner product appearing in $E_1$, because of \eqref{explicitmean} we only get negative contributions if $\hat q_i>\frac{1}{2}$. Therefore, we can lower bound
\begin{align*}
    E_1&\geq -\frac{2}{d}\sum\limits_{i\in d}c_i\frac{2}{\sqrt 2A\sqrt c_i}e^{-\delta^2c_iA^2}\\&\geq 
    -\frac{2\sqrt 2}{A\sqrt d}e^{-\delta^2\beta A^2},
\end{align*}
where we used Cauchy-Schwartz in the last inequality.
As for the second term, the definition of variance and \eqref{vardown} gives, for every $A\geq \frac{1}{2\beta}$,
\begin{align*}
    E_2&= \frac{1}{d}\sum_{i\in [d]}c_i^2\mathbb E\Big[\Big(\epsilon_i(\sigma_i^2, \hat q_i)-\Delta q_i\Big)^2\Big|\bold q\Big]\\&\geq \frac{\rho}{d}\sum_{i\in [d]}c_i^2\frac{1}{2c_iA^2}\\&\geq \frac{\rho}{2dA^2}.
\end{align*}
Therefore, using the definition of $H$ in \eqref{choiceforH} we obtain
\begin{align*}
    \mathbb E(\overline p^2(j)-\overline p^2(j-1)|\bold p(j-1))\geq E_1+E_2\geq H,
\end{align*}
which proves \eqref{equivalentstatement}. The last statement follows at once from definition of $w_H$ and the constraint $\overline p\in [0,1]$.
\end{proof}

\bigskip

Figure \ref{fig:hitmaxrange} shows the average values of $T_{0.05}$ for a range of $A$ and two distinct value of $d$. The complete graph with uniform weights is used once again for demonstration. The quadratic increase in $A$ and the linear dependence on $d$ are visible from the plots. The need for $A$ large enough is also apparent.
\begin{figure}[h]
    \centering
    \subfloat[$d=4$]{
        \includegraphics[width=0.45\textwidth]{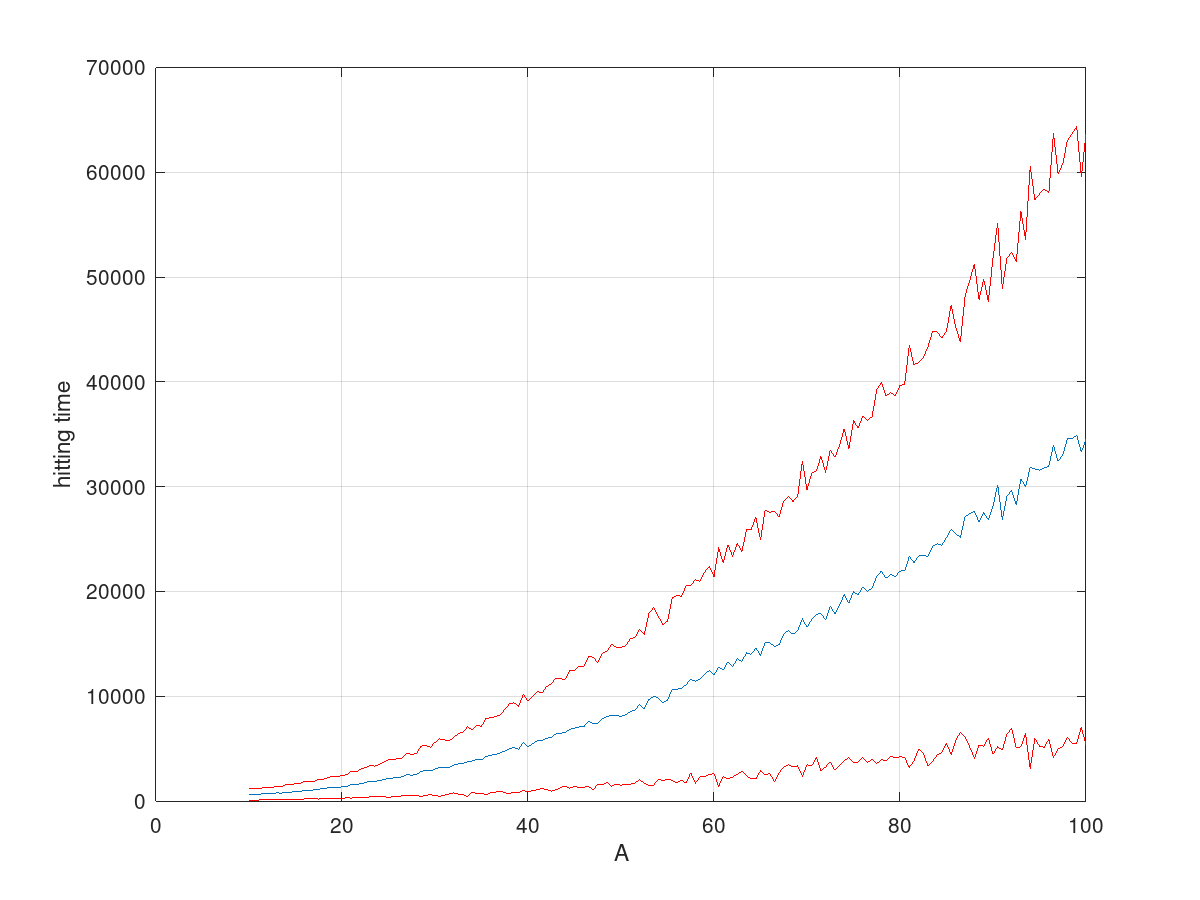}
     } 
  \subfloat[$d=8$]{
    \includegraphics[width=0.45\textwidth]{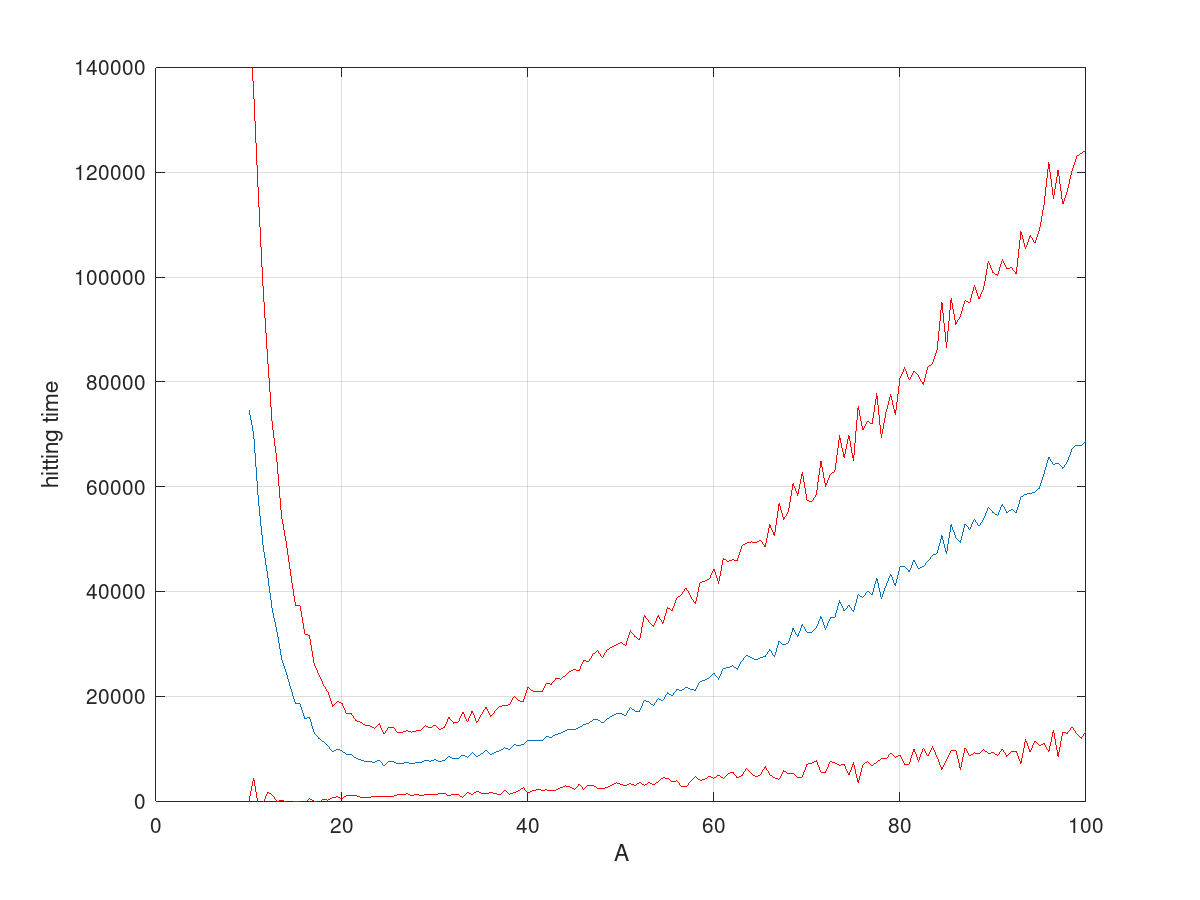}
  }
    \caption{Mean value of the hitting time of $T_{0.05}$, with over 1000 runs for each value of $A$. The stripe with a width of the empirical standard deviation is also shown.}
    \label{fig:hitmaxrange}
\end{figure}

\section{Stitching the elements}\label{sectionproof}

We first prove our result for connected networks.

\begin{proof}[Proof of Theorem \ref{MAINTHEOREM}]
For the lower bound, let $\bold p_0=\frac{1}{2}\bold 1$ and choose the optimal coupling $(\bold p(k), \bold q)$ as needed for \eqref{metricwecare}, where $\bold p(k)\sim K_{A,C}^{*k}(\bold p_0)$ and $\bold q\sim \pi_{A,C}$. Notice that $\|\bold p-\bold q\|_{\infty}\geq |\overline p(k)-\overline q|$ by convexity. For given $\delta>0$, take $A$ is large enough as in \eqref{LOWERBOUND_STAT} and $k\leq m(\delta)\frac{\lambda}{\gamma}A^2$, where $m$ is the one given by Lemma \ref{barycenterdidnottravel}. We can combine the concentration result \eqref{LOWERBOUNDGIBBS} from Lemma \ref{barycenterdidnottravel}, together with the anti-concentration result \eqref{LOWERBOUND_STAT} from Lemma \ref{spreadforstat}, and obtain
\begin{align*}
    d_{\infty}(K_{A,C}^{*k}(\bold p_0),\pi_{A,C})&=\mathbb E(\|\bold p(k)-\bold q\|_{\infty})\\&\geq \mathbb E(|\overline p(k)-\overline q|)\\&\geq \mathbb E\Big(\Big|\overline q-\frac{1}{2}\Big|\Big)-E\Big(\Big|\overline p(k)-\frac{1}{2}\Big|\Big)\\&\geq \frac{1}{4}-2\delta,
\end{align*}
from which the lower bound follows since $\delta$ is arbitrary.\\ \\
We now turn to the upper bound. For a given $\delta>0$, pick $M=M(\delta)$ to be fixed later, and $k=MdA^2$. Given $\bold p_0\in [0,1]$, consider four walkers, one starting at $\bold p_0$, one starting at $\bold 0$, one starting at $\bold 1$ and one starting at stationarity. Couple the updates according to Lemma \ref{Coup}, and denote their positions by $\bold p(k), \bold p^{(0)}(k), \bold p^{(1)}(k),$ and $\bold q(k)$ respectively. Define, for $\delta\in (0,\frac{1}{2})$, 
\begin{equation}\label{relevantstopping}
T'_{\delta}:=\inf_{k\in\mathbb N}\{\bold 1-\bold p^{(0)}(k)\leq 2\delta \bold 1\}.
\end{equation}
Because of Lemma \ref{Coup}, once $\bold p^{(0)}(k)$ is close to the opposite corner $\bold 1$, it will also be close to $\bold p^{(1)}(k)$ -- which guarantees that $\bold p(k)$ and $\bold q(k)$ are also close to each other, thanks to the order-preserving property. Moreover, Lemma \ref{Coup} ensures that the walkers will remain close at later times as well. 
\begin{align*}
    \sup_{\bold p_0\in [0,1]^d}d_{\infty}(K_{A,C}^{*k}(\bold p_0),\pi_{A,C})&\leq \mathbb E(\|\bold p(k)-\bold q(k)\|_{\infty})\\&\leq \mathbb E(\|\bold p^{(1)}(k)-\bold p^{(0)}(k)\|_{\infty})\\&\leq 2\delta+\mathbb P(T_{\delta}'>k).
\end{align*}
Notice the inclusion of the events
\begin{align*}
    \{T'_{\delta}>k\}\leq \{T_{\delta}>k\}\cup \Big\{\max\limits_{i,j\in [d], t\in [k]}|p^{(0)}_i(t)-\hat p^{(0)}_j(t)|\geq \delta\Big\}.
\end{align*}
Using a union bound, Markov inequality, the large deviation bound \eqref{sharpoffidiagonal} from Lemma \ref{largedeviations} and the bound for $\mathbb E(T_{\delta})$ in \eqref{boundonstoppingtime} from Lemma \ref{stoppingtime},  we have that for $A\geq A^*(\delta,\beta,\lambda)$  
\begin{align*}
    \sup_{\bold p\in [0,1]^d}d_{\infty}(K_{A,C}^{*k}(\bold p),\pi_{A,C})&\leq 2\delta+\frac{4dA^2}{k\rho}+13ke^{-\frac{\lambda\beta\delta A}{2d}}\\ &\leq 2\delta+\frac{4}{\rho M}+13 M dA^2e^{-\frac{\lambda\beta\delta A}{2d}}.
\end{align*}
Taking $M$ large enough, upon increasing $A^*$ we obtain
\begin{align*}
    \sup_{\bold p\in [0,1]^d}d_{\infty}(K_{A,C}^{*k}(\bold p),\pi_{A,C})\leq 4\delta,
\end{align*}
from which the upper bound follows for $k=MdA^2$ since $\delta$ is arbitrary. For $k > MdA^2$, the same argument can be run shifting the initial position from $\bold p_0$ to $\bold p(k-MdA^2)$ and exploiting the convexity of the metric.
\end{proof}

The analogous simulation on $T'_\delta$ we have seen for $T_\delta$ in Figure \ref{fig:hitmaxrange} is presented below in Figure \ref{fig:hitallrange}, for the complete graph with uniform weights, with the same values of $d$ considered there. The simulations confirm the intuition that for $A$ large enough, once the maximum is within $\delta$ of $\bold 1$, all of $\bold p$ is within $2\delta$ thanks to the clustering (with small exceptional probability). The need for a large enough $A$ is visible again.

\begin{figure}[h]
    \centering
    \subfloat[$d=4$]{
        \includegraphics[width=0.45\textwidth]{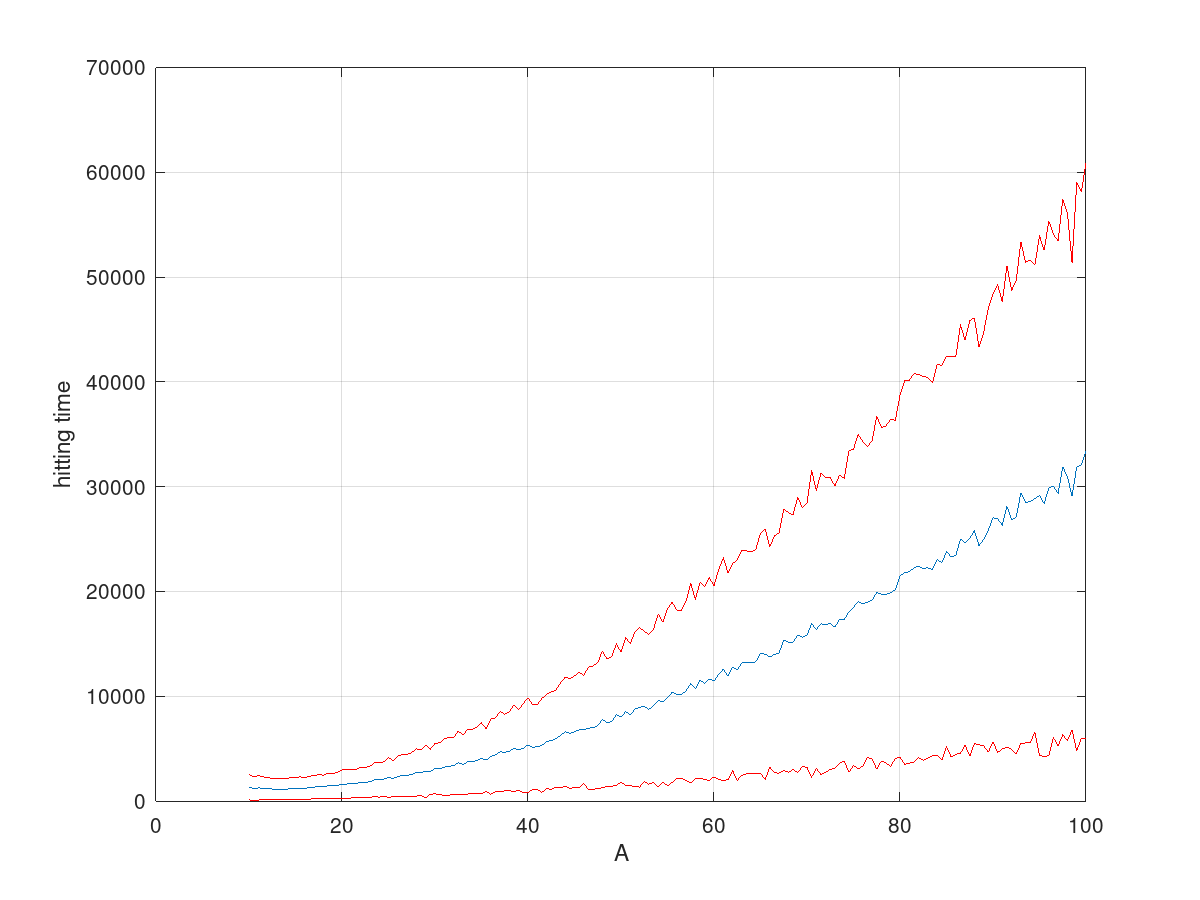}
     } 
  \subfloat[$d=8$]{
    \includegraphics[width=0.45\textwidth]{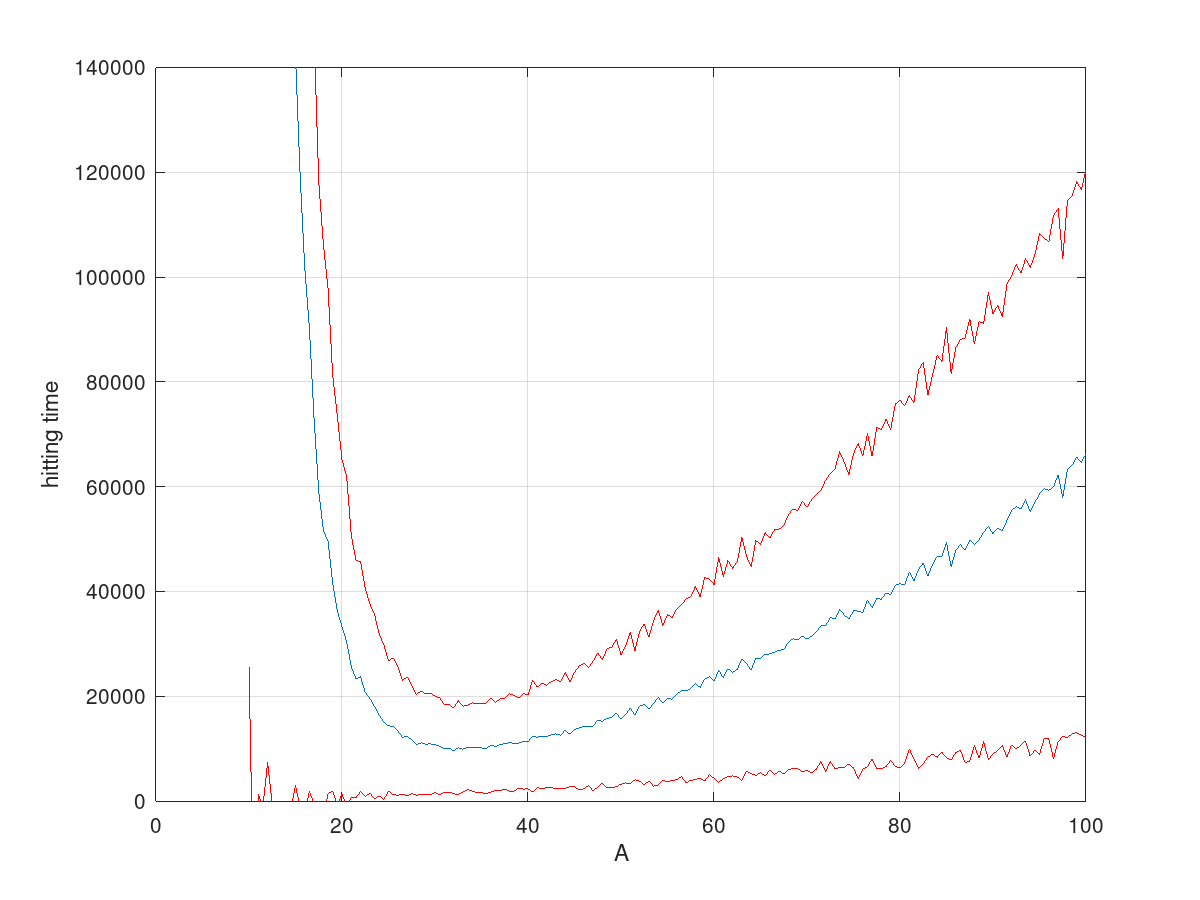}
  }
    \caption{Mean value of the hitting time of $T'_{0.05}$, with over 1000 runs for each value of $A$. The stripe with a width of the empirical standard deviation is shown.}
    \label{fig:hitallrange}
\end{figure}
We now lift the previous result to a generic non-zero network.
\begin{proof}[Proof of Theorem \ref{MAINTHEOREMDISC}.]
Let $\bold p_0=\frac{1}{2}\bold 1$ and $\delta>0$, and consider, as in the previous proof, the optimal coupling between $\bold p(k)\sim K_{A,C}^{*k}(\bold p_0)$ and $\bold q\sim \pi_{A,C}$. Call $\bold p^{(\mathcal C)}$ and $\bold q^{(\mathcal C)}$ the corresponding coordinates of $\bold p$ and $\bold q$ of any connected component -- which we called $\mathcal C$ -- with at least one edge. In order to apply Theorem \ref{MAINTHEOREM} to the network $\mathcal C$, we need to normalize its total weight $|\mathcal C|$ and absorb it in the parameter $A$. This will only change the value of $A^*$, which does not affect the current statement since $A^*$ is graph-dependent. Therefore, for suitable $A^*(\delta, C)$ and $k\leq \alpha(\mathcal C, \delta) A^2$, where
\begin{align*}
  \alpha(\mathcal C,\delta):=m(\delta)\frac{\lambda(\mathcal C)}{\gamma(\mathcal C)} |\mathcal C|,
\end{align*} 
an application of Theorem \ref{MAINTHEOREM} gives
\begin{align*}
    d_{\infty}(K_{A,C}^{*k}(\bold p_0), \pi_{A,C})&\geq \mathbb E(\|\bold p^{(\mathcal C)}(k)-\bold q^{(\mathcal C)}\|_{\infty})\\&\geq \frac{1}{4}-\delta.
\end{align*}
Maximizing over all connected component we obtain the $\alpha$ appearing in the lower bound. 
\\ 

As for the upper bound, for $\delta>0$ and $k\in\mathbb N$ define $\mathcal T_k$ to be the event where some connected component of size $n_{\mathcal C}$ is selected less than $\frac{k(1-\epsilon_{\delta})n_{\mathcal C}}{d}$ times. Here, $\epsilon_{\delta}$ is chosen in such a way that
\begin{align*}
    \mathbb P(\mathcal T_k)\leq \sum_{\mathcal C}\mathbb P\Big(\text{Bin}\Big(k,\frac{n_{\mathcal C}}{d}\Big)\leq \frac{k(1-\epsilon_{\delta})n_{\mathcal C}}{d}\Big)\leq \delta.
\end{align*}
Define 
\begin{align*}
    \eta(\mathcal C,\delta):=M(\delta)n_{\mathcal C}|\mathcal C|A^2
\end{align*} 
and choose $k$ large enough so that $\frac{k(1-\epsilon_{\delta})n_{\mathcal C}}{d}\geq \eta(\mathcal C,\delta)$ for all $\mathcal C$, which is satisfied when $k\geq \max_{\mathcal C}\frac{M(\delta)}{1-\epsilon_{\delta}}|\mathcal C|dA^2=:\alpha'A^2$. Then, Theorem \ref{MAINTHEOREM} is in force whenever $\mathcal T_k$ does not occur and $A\geq A^*(\delta, C)$ for some suitable $A^*$. Therefore, 
\begin{align*}
    d_{\infty}(K_{A,C}^{*k}(\bold p_0), \pi_{A,C})&\leq \mathbb P(\mathcal T_k)+\sup_{\mathcal C}d_{\infty}(K^{*\eta}_{A,\mathcal C}(\bold p_0^{(\mathcal C)}), \pi_{A,\mathcal C})\\&\leq 2\delta.
\end{align*}
from which the conclusion holds since $\delta$ is arbitrary.
\end{proof}

\section{Open problems and future directions}\label{sectionfuture}
\subsection{From Wasserstein to total variation.} Using the concentration and anti-concentration bounds in Lemma \ref{barycenterdidnottravel} and Lemma \ref{spreadforstat}, together with Chebyschev inequality, it is easy to upgrade the lower bound in Wasserstein distance to a lower bound of $1-\delta$ in total variation after $k=m(\delta)\frac{\lambda}{\gamma}A^2$ steps. \\ \\The situation is more complicated for the upper bound. Notice that because of conditions \eqref{choiceforH} and \eqref{conditionforeta}, our strategy only allows to push two walkers within $\delta\gg A^{-1}$ of each other. On the other hand, in order to couple even a single coordinate in total variation requires the two walkers to be closer than $O(A^{-1})$, which is the order of the variance for the single-step update in the Gibbs sampler. We believe that this is just a limitation of our method, that forces the walkers to couple in the vicinity of the corner.
\subsection{Sharper graph-dependent bounds}
The upper and lower bounds in Theorem \ref{MAINTHEOREM} are close to each other only for small perturbations of complete graphs, thus failing to provide a sharp answer to the following two interesting questions. \\ \\
A first one comes from a statistical application, when $d$ is small, but the $c_{ij}$'s are far from being equal. An example, analyzed in \cite{de1972probability}, concerns the probability of positive feedback from a certain drug in a population split into four categories: smoking men, smoking women, non-smoking men and non-smoking women (we label them $1, 2, 3, 4$ respectively). It is then reasonable to start with a prior (the weights are normalized according to Assumption \ref{assumption})
\begin{align*}
    \frac{1}{4}-2\epsilon=c_{12}=c_{34}\gg c_{13}=c_{14}=c_{23}=c_{24}=\epsilon,
\end{align*}
if we assume that smoking, rather than sex, is the main factor in the effectiveness of the drug (in their example, $\epsilon\approx 10^{-6}$). Using Theorem \ref{MAINTHEOREM} and a direct computation of the eigenvalues shows that after $k\ll \epsilon A^2$
steps the chain is not mixed yet in Wasserstein distance. On the other hand, after $k\gg A^2$ the chain is mixed. When $\epsilon$ is small, our result leaves a gap to be filled.\\ \\
Another question, more mathematically flavored, concerns family of large graphs other than the complete one. For example, for a cycle in $d$ dimensions ($d$ large), our Theorem \ref{MAINTHEOREM} shows that $d^{-2}A^2$, are needed for the mixing, while $dA^2$ suffices. This leaves a $d^3$ factor between the two bounds, raising the question of where the truth lies in between. It would be interesting to sharpen the results and understand better the relation between mixing time and geometry.
\subsection{The posterior case.} While our analysis deals with prior distributions for almost exchangeable experiments, it is interesting future work to analyze the posterior case, once data are observed. As detailed in \cite{Bacallado2015}, assume we start with a prior $\pi_{A,C}$ as defined in \eqref{stationary}, for some network $C$ on $d$ vertices, and data are sampled from some distribution. If $\bold n=(n_i)_{i\in [d]}$ data are collected in each category, and $\bold r=(r_i)_{i\in [d]}$ is the vector of positive outcomes in each category, mild assumptions on the true distributions guarantee -- for $n=\sum n_i$ large -- a posterior of the form
\begin{align*}
    \tilde \pi_{\bold n,\bold r, A, C}(\bold p)\propto \exp\{-A^2\sum_{i<j}c_{ij}(p_i-p_j)^2-nR_{\bold n,\bold r}(\bold p)\},
\end{align*}
where $R$ is the quadratic polynomial 
\begin{align*}
    R_{\bold n,\bold r}(\bold p):=\sum\limits_{i\in [d]}\Big(\frac{p_i-p^*_i}{\sigma_i}\Big)^2, \quad p^*_i:=\frac{r_i}{n_i}, \quad \sigma_i^2:=\frac{p^*_i(1-p_i^*)}{\frac{n_i}{n}}.
\end{align*}
The regime of interest for the measure $\tilde \pi_{\bold n,\bold r,A,C}$ lies, as phrased by de Finetti \cite{de1972probability}, ``in the penetration it affords into intermediate situations where the influence of initial opinion and experience balance each other". In our language, this amounts to set $n=\eta A^2$ for some fixed $\eta>0$, so that the posterior becomes of the form (we drop for convenience some of the dependencies)
\begin{align*}
    \tilde \pi_{A,Q}\propto \exp\{-A^2Q(\bold p)\},
\end{align*}
where $Q$ is the quadratic form 
\begin{align*}
    Q(\bold p):=\sum_{i<j}c_{ij}(p_i-p_j)^2+\eta R_{\bold n,\bold r}(\bold p).
\end{align*}
Unlike the prior distribution, the zero set of $Q$ is not necessarily the main diagonal. Moreover, the situation is complicated by the possible lack of convexity in $Q$. Even simple examples in two dimensions -- e.g. data coming from a unit mass at $(1,0)$ or at $(\frac{1}{2},\frac{1}{2})$ -- suggest an interesting behavior and seem worthy of further investigation.
\subsection{Interacting particles system}
As mentioned in Remark \ref{IPS}, we can view the Gibbs sampler as a dynamic for an interacting system with $d$ particles. Our analysis concerns the mixing time to stationarity in the low temperature regime, but for fixed number of particles. In this context, it is natural to address the question of taking a thermodynamical limit ($d$ increasing) for generic $A$, and the emergence of phase transitions. Even for the case of the complete graph with uniform weights, this seems an interesting and challenging problem. 
\section*{Acknowledgment}
We warmly thank Persi Diaconis for suggesting the problem being studied, and for his constant help and support. B. Gerencs\'er was supported by the J\'anos Bolyai Research Scholarship and the “Lendület” grant LP 2015-6 of the Hungarian Academy of Sciences.
 \bibliographystyle{abbrv}
  \bibliography{Bibliography.bib}
\end{document}